\tikzstyle{empty}=[circle,draw=black!80,thick]
\tikzstyle{emptyn}=[circle,draw=black!80,fill=white,scale=0.5] 
\tikzstyle{nero}=[circle,draw=black!80,fill=black!80,thick]
\newcommand{\Irr}{{\operatorname{Irr}}}
\newcommand{\fS}{\mathfrak{S}}
\newcommand{\fA}{\mathfrak{A}}
\newcommand{\triv}{\mathbbm{1}}
\newcommand{\innprod}[2]{\langle \chi^{#1}\big\downarrow_{#2}, \triv_{#2} \rangle}
\newcommand{\down}{\big\downarrow}
\newcommand{\up}{\big\uparrow}
\newtheorem{thm}{Theorem}[section]
\newtheorem{lem}[thm]{Lemma}
\newtheorem{prop}[thm]{Proposition}
\newtheorem*{thmA}{Theorem A}
\newtheorem*{thmB}{Theorem C}
\newtheorem*{corC}{Corollary B}
\newtheorem*{corD}{Corollary D}
\newtheorem*{Question}{Question}
\theoremstyle{definition}
\newtheorem{exmp}[thm]{Example}
\newtheorem{defn}[thm]{Definition}
\newtheorem{rem}[thm]{Remark}
\begin{document}

\title{On permutation characters and Sylow $p$-subgroups of $\fS_n$}

%\date{\today}

\author{Eugenio Giannelli}
\address[E. Giannelli]{Department of Pure Mathematics and Mathematical Statistics, University of Cambridge, Cambridge CB3 0WA, UK}
\email{eg513@cam.ac.uk}

\author{Stacey Law}
\address[S. Law]{Department of Pure Mathematics and Mathematical Statistics, University of Cambridge, Cambridge CB3 0WA, UK}
\email{swcl2@cam.ac.uk}

\thanks{The first author's research was funded by Trinity Hall, University of Cambridge.}
 
\begin{abstract}
Let $p$ be an odd prime and let $n$ be a natural number. In this article we determine the irreducible constituents of the permutation module induced by the action of the symmetric group $\fS_n$ on the cosets of a Sylow $p$-subgroup $P_n$.
As a consequence, we determine the number of irreducible representations of the corresponding Hecke algebra $\mathcal{H}(\fS_n, P_n, \triv_{P_n})$.
\end{abstract}

\keywords{}

%\subjclass[]{}

\maketitle

%%%%%%%%%%%%%%%%%%%%%%%%%%%%%%%%%%%%%%%%%%%%%%%%%%%%%%%%%%%%%%%%%%%%%%%%%
\section{Introduction}\label{sec:intro}
In this article we answer a question of Alex Zalesski (private communication with the first author) concerning the decomposition into irreducible constituents of the permutation character $(\triv_{P_n})\up^{\fS_n}$, where $\fS_n$ is the symmetric group of degree $n$, $p$ is an odd prime and $P_n$ is a Sylow $p$-subgroup of $\fS_n$.
More precisely, our main result determines all of the irreducible constituents of $(\triv_{P_n})\up^{\fS_n}$ in characteristic 0.
We recall that the set $\Irr(\fS_n)$ of ordinary irreducible characters of the symmetric group is naturally in bijection with the set $\mathcal{P}(n)$ of partitions of $n$. 
For any $\lambda\in\mathcal{P}(n)$ we let $\chi^\lambda\in\mathrm{Irr}(\fS_n)$ be the corresponding irreducible character. 

\begin{thmA} 
Let $p\geq 5$ be a prime, let $n$ be a natural number and let $\lambda\in\mathcal{P}(n)$. Then $\chi^\lambda$ is not an irreducible constituent of $(\triv_{P_n})\up^{\fS_n}$ if and only if $n=p^k$ for some $k\in\mathbb{N}$ and $\lambda\in\{(p^k-1,1), (2,1^{p^k-2}) \}$.

If $p=3$, then $\chi^\lambda$ is not an irreducible constituent of $(\triv_{P_n})\up^{\fS_n}$ if and only if $n=3^k$ for some $k\in\mathbb{N}$ and $\lambda\in\{(3^k-1,1), (2,1^{3^k-2}) \}$, or $n\leq 10$ and $\lambda$ is one of the following partitions: $$(2,2);\ \ (3,2,1);\ \ (5,4), (2^4,1), (4,3,2), (3^2,2,1);\ \ (5,5),(2^5).$$
\end{thmA}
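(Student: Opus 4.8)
The plan is to reduce the computation to the case $n=p^k$ a prime power, and then to analyse $\psi_k:=(\triv_{P_{p^k}})\up^{\fS_{p^k}}$ by induction on $k$ via the isomorphism $P_{p^k}\cong P_{p^{k-1}}\wr C_p$. Writing the $p$-adic expansion $n=\sum_i a_ip^i$, the subgroup $P_n$ is conjugate to $\prod_i (P_{p^i})^{a_i}$ inside the Young subgroup $Y=\prod_i (\fS_{p^i})^{a_i}$, so transitivity of induction shows that $(\triv_{P_n})\up^{\fS_n}$ is obtained by inducing from $Y$ the outer tensor product of $a_i$ copies of $\psi_i$ for each $i$. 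By Young's rule and the Littlewood--Richardson rule, $\chi^\lambda$ is then a constituent of $(\triv_{P_n})\up^{\fS_n}$ if and only if there is a family of partitions $\mu_{i,j}\vdash p^i$, one per block, with every $\chi^{\mu_{i,j}}$ a constituent of $\psi_i$ and the Littlewood--Richardson coefficient $c^\lambda_{(\mu_{i,j})}$ non-zero. Two elementary inputs feed this repeatedly: since $p$ is odd and $P_{p^i}\le\fA_{p^i}$, both the trivial $\chi^{(p^i)}$ and the sign $\chi^{(1^{p^i})}$ are constituents of $\psi_i$; and $\psi_0=\chi^{(1)}$, so when $p\nmid n$ the branching rule lets us pass from $n$ to $n-1$.

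\textbf{The two hook-type exceptions are genuinely missing.} This direction is short. Since $\langle\chi^{(m-1,1)},(\triv_H)\up^{\fS_m}\rangle$ equals the number of $H$-orbits on $m$ points minus $1$, transitivity of $P_{p^k}$ shows $\chi^{(p^k-1,1)}$ is not a constituent of $\psi_k$; tensoring with the sign, which restricts trivially to $P_{p^k}$, rules out the conjugate $\chi^{(2,1^{p^k-2})}$ as well. When $n$ is not a power of $p$ the group $P_n$ has at least two orbits, so the same computation shows that $\chi^{(n-1,1)}$ and $\chi^{(2,1^{n-2})}$ \emph{are} constituents; the remaining exceptions claimed for $p=3$ (at $n=4,6,9,10$) will fall out of the explicit decompositions produced above, e.g. $(\triv_{P_4})\up^{\fS_4}=\chi^{(4)}+\chi^{(3,1)}+\chi^{(2,1,1)}+\chi^{(1^4)}$ omits exactly $\chi^{(2,2)}$, the two $C_3$-blocks at $n=6$ omit exactly $\chi^{(3,2,1)}$, and $n=10$ reduces by branching to $n=9$.

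\textbf{The core: every other $\chi^\mu$ occurs in $\psi_k$.} The main work is the inductive claim that for $p\ge 5$ every $\chi^\mu$ with $\mu\vdash p^k$, bar the two hook-type partitions, is a constituent of $\psi_k$ (with a slightly longer exceptional list when $p=3$). Here one uses $P_{p^k}=P_{p^{k-1}}\wr C_p=P_{p^{k-1}}^{\times p}\rtimes C_p$ and the following observation. The space of $P_{p^{k-1}}^{\times p}$-fixed vectors in $\chi^\mu$ decomposes, along the Young subgroup $\fS_{p^{k-1}}^{\times p}\le\fS_{p^k}$, as a sum over $p$-tuples $\vec\nu=(\nu_1,\dots,\nu_p)$ of partitions of $p^{k-1}$, the $\vec\nu$-part having dimension $c^\mu_{\vec\nu}\prod_j\langle\chi^{\nu_j},\psi_{k-1}\rangle$; the cyclic group $C_p$ permutes these parts according to its action on tuples, and since $p$ is prime every orbit has length $1$ (constant tuples) or $p$. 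A length-$p$ orbit contributes to the $C_p$-fixed subspace — which is $(\chi^\mu)^{P_{p^k}}$, i.e. $\langle\chi^\mu,\psi_k\rangle$ — a subspace of dimension equal to that part, so: whenever there is a non-constant tuple $\vec\nu$ with all $\chi^{\nu_j}$ constituents of $\psi_{k-1}$ and $\langle\chi^\mu,\chi^{\nu_1}\cdots\chi^{\nu_p}\rangle\neq0$, then $\chi^\mu$ is a constituent of $\psi_k$. To produce such a tuple for a given non-exceptional $\mu\vdash p^k$, I would take $\nu_3,\dots,\nu_p$ to be suitable copies of $(p^{k-1})$ and $(1^{p^{k-1}})$ and choose $\nu_1,\nu_2$ — all permitted, since by induction $\psi_{k-1}$ omits only two partitions, easy to avoid — so that $\mu$ is reached from $\chi^{\nu_1}\cdot\chi^{\nu_2}$ by repeatedly adjoining horizontal strips of size $p^{k-1}$ (multiplying by $h_{p^{k-1}}$) and vertical strips of size $p^{k-1}$ (multiplying by $e_{p^{k-1}}$), via the Pieri rule; the residual constituents of $\psi_k$ from the constant tuples are the plethysms $s_\nu[s_\mu]$ with $\chi^\nu$ a constituent of $\psi_1=(\triv_{C_p})\up^{\fS_p}$ (whose decomposition is the classical Frobenius computation, already giving the $n=p$ case).

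\textbf{Assembling the general $n$, and the main obstacle.} For $n$ not a prime power one feeds the prime-power result back into the Littlewood--Richardson criterion of the first step: if some $p^i$ occurs with multiplicity $a_i\ge2$, or if at least two distinct block sizes occur, any $\lambda\vdash n$ can be built from constituents of the $\psi_i$ by the same Pieri manoeuvre (trivial and sign on all but one or two blocks, Pieri adjustments on the rest), while the reduction $n\leadsto n-1$ for $p\nmid n$ and the base cases $n=p,\,p^2$ close the induction. The principal obstacle throughout is the combinatorial heart of the third step (equivalently the analogous assembly here): pinning down exactly which partitions cannot be produced and proving that list is complete. For $p\ge5$ there is enough slack — $p-2\ge3$ free ``trivial/sign'' slots in each wreathing step — that a uniform argument works; for $p=3$ only one such slot is available, the analysis is genuinely tight for small $k$, and this is precisely where the extra exceptions at $n=9=3^2$ and the other small $n$ emerge, so there the cleanest route is to combine the structural reduction with a direct verification in the finitely many small cases.
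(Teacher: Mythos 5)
Your overall strategy is the same as the paper's: reduce to the prime-power case via the direct-product structure of $P_n$ inside a Young subgroup, handle $n=p^k$ by induction along $P_{p^k}\cong P_{p^{k-1}}\wr C_p$, and detect the trivial constituent by finding a \emph{non-constant} tuple $(\nu_1,\dotsc,\nu_p)$ of non-exceptional partitions of $p^{k-1}$ with $c^\mu_{\vec\nu}\neq 0$. Your fixed-space/orbit argument for why a non-constant tuple suffices is a correct repackaging of the paper's Mackey-formula argument (Proposition \ref{prop:if-property-star}). One genuine improvement: your treatment of the exceptional direction via $\langle\chi^{(n-1,1)}\down_H,\triv_H\rangle=\#\{H\text{-orbits}\}-1$ and transitivity of $P_{p^k}$ is much shorter than the paper's route, which goes through restriction formulas from \cite{GTT} and \cite{G2} inside the induction.

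However, there is a genuine gap at the combinatorial core, and it is exactly the part of the paper that occupies all of Section \ref{sec:have-prop-star}. You assert that for every non-exceptional $\mu\vdash p^k$ a suitable non-constant tuple can be produced by a ``Pieri manoeuvre'' (filling $p-2$ slots with $(p^{k-1})$ or $(1^{p^{k-1}})$ and adjusting $\nu_1,\nu_2$), and that ``for $p\ge 5$ there is enough slack that a uniform argument works''. This is not substantiated, and the claim is delicate: the natural two-factor version of it is \emph{false} for the rectangles $(p^{k-1},p^{k-1})$ and $(2^{p^{k-1}})$ (every Littlewood--Richardson constituent of $\chi^{(m,m)}\down_{\fS_m\times\fS_m}$ is of the form $\chi^\nu\times\chi^\nu$), which is why the paper must define the smaller set $A(2,p^k)$ at the first stage of its sub-induction on $q\in\{2,\dotsc,p\}$ and then recover these rectangles at later stages. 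Likewise, partitions $\lambda$ for which the skew shape $[\lambda\setminus\Omega_q(\lambda)]$ admits only fillings of type $(p^k-1,1)$ or $(2,1^{p^k-2})$ (e.g.\ rectangles such as $((2p^b)^{p^a})$, and near-hooks) require the ad hoc constructions of Lemmas \ref{lem:skew}, \ref{lem:q=2-not-omega} and the case analysis in Propositions \ref{prop:q=2} and \ref{prop:have-property-star}; your sketch does not explain how the Pieri strips reach these. The same issue recurs in your assembly step for general $n$: when $n=p^l+p^k$ with $l<k$, one must rule out that \emph{every} filling of $[\lambda\setminus\mu]$ has exceptional type, which again needs Lemmas \ref{lem:BK} and \ref{lem:filling-type-(m-1,1)} and a further partition surgery. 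In short, the architecture is right and two of the three pillars (the wreath-product transfer and the non-occurrence of the hooks) are sound, but the existence of the required tuples --- the actual content of Theorem A --- is asserted rather than proved.
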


Ignoring for a moment the few exceptions arising for small symmetric groups at the prime $3$, Theorem A shows that given any natural number $n\in\mathbb{N}$ which is not a power of $p$, the restriction to $P_n$ of any irreducible character of $\fS_n$ has the trivial character $\triv_{P_n}$ as a constituent.
We remark immediately that this clearly does not hold for $p=2$. For instance, the sign representation of $\fS_n$ restricts irreducibly and non-trivially to a Sylow $2$-subgroup of $\fS_n$. More generally, when $n$ is a power of $2$, \cite[Theorem 1.1]{G} shows that no non-trivial irreducible character of odd degree of $\fS_{n}$ appears as an irreducible constituent of $(\triv_{P_n})\up^{\fS_n}$, where $P_n\in\mathrm{Syl}_2(\fS_n)$.
The above observations underline that for the prime $2$ the situation is notably less regular than for odd primes; we believe that in this case a very large proportion (almost half) of the irreducible characters of $\fS_n$ are not irreducible constituents of the discussed permutation character. Nevertheless, at the time of this writing we do not have a conjecture for a characterization of the subset of partitions of $n$ labelling irreducible characters appearing as constituents of $(\triv_{P_n})\up^{\fS_n}$ when $p=2$.

\medskip

Let $\mathcal{H}:=\mathcal{H}(\fS_n,P_n,\triv_{P_n})$ be the Hecke algebra naturally corresponding to the permutation character $(\triv_{P_n})\up^{\fS_n}$. (We refer the reader to \cite[Chapter 11D]{CR} for the complete definition and properties of this correspondence.)
It is well known that the number of irreducible representations of $\mathcal{H}$ equals the number of distinct irreducible constituents of the corresponding permutation character (see for example \cite[Theorem (11.25)(ii)]{CR}). 
Therefore our Theorem A has the following consequence. 

\begin{corC}
Let $p$ be an odd prime and let $n>10$ be a natural number. 
If $n\neq p^k$ (respectively $n=p^k$) then the Hecke algebra $\mathcal{H}$ has exactly $|\mathcal{P}(n)|$ (respectively $|\mathcal{P}(n)|-2$) 
irreducible representations. 
\end{corC}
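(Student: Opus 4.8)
The plan is to deduce the statement directly from Theorem A together with the standard dictionary between Hecke algebras and permutation characters. Recall from \cite[Theorem (11.25)(ii)]{CR} that the number of irreducible representations of $\mathcal{H}=\mathcal{H}(\fS_n,P_n,\triv_{P_n})$ equals the number of \emph{distinct} irreducible constituents of the permutation character $(\triv_{P_n})\up^{\fS_n}$. Since $\Irr(\fS_n)$ is in natural bijection with $\mathcal{P}(n)$, it therefore suffices to subtract from $|\mathcal{P}(n)|$ the number of ``missing'' irreducible characters, i.e.\ the number of $\lambda\in\mathcal{P}(n)$ for which $\chi^\lambda$ fails to be a constituent of $(\triv_{P_n})\up^{\fS_n}$, which is precisely the quantity described by Theorem A.

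First I would dispose of the $p=3$ sporadic exceptions: every partition appearing in the supplementary list in Theorem A is a partition of some $m\leq 10$, so under the hypothesis $n>10$ none of these arises. Hence, for $n>10$, Theorem A reduces to the following statement: if $n\neq p^k$ for all $k$, then every $\chi^\lambda$ with $\lambda\in\mathcal{P}(n)$ is a constituent of $(\triv_{P_n})\up^{\fS_n}$, so the number of distinct constituents is $|\mathcal{P}(n)|$; and if $n=p^k$, then the only irreducible characters that fail to appear are $\chi^{(p^k-1,1)}$ and $\chi^{(2,1^{p^k-2})}$.

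It remains to check that, in the case $n=p^k>10$, these two exceptional characters are genuinely distinct, so that exactly two characters are missing. The partitions $(p^k-1,1)$ and $(2,1^{p^k-2})$ are conjugate to one another, and $(p^k-1,1)$ is self-conjugate precisely when $p^k-1=2$, that is, when $p^k=3$. Since $p^k=n>10$ we have $p^k\neq 3$, so $(p^k-1,1)\neq(2,1^{p^k-2})$ and the two missing characters are indeed distinct. Consequently the number of distinct irreducible constituents of $(\triv_{P_n})\up^{\fS_n}$ equals $|\mathcal{P}(n)|-2$, which by the correspondence recalled above is the number of irreducible representations of $\mathcal{H}$; together with the previous paragraph this yields both cases of the statement.

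As the argument is a direct bookkeeping consequence of Theorem A, there is no substantial obstacle; the only points that require a moment's care — and the only places where the hypothesis $n>10$ is used — are the exclusion of the finitely many $p=3$ exceptions and the verification that the two distinguished partitions at a prime power do not coincide.
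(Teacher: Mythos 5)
Your proposal is correct and follows exactly the route the paper takes: invoke \cite[Theorem (11.25)(ii)]{CR} to identify the number of irreducible representations of $\mathcal{H}$ with the number of distinct irreducible constituents of $(\triv_{P_n})\up^{\fS_n}$, and then count the missing constituents via Theorem A. The only additions are the (welcome but routine) checks that the $p=3$ sporadic exceptions live in degree at most $10$ and that $(p^k-1,1)\neq(2,1^{p^k-2})$ for $p^k>10$, which the paper leaves implicit.
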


As explained in \cite[Theorem 11.25(iii)]{CR}, understanding the dimensions of the irreducible representations of $\mathcal{H}$ is equivalent to determining the multiplicities of the irreducible constituents of $(\triv_{P_n})\up^{\fS_n}$.
For this reason we believe that it would be extremely interesting to find a solution to the following problem. 

\begin{Question}
Is there a combinatorial description of the map $f: \mathcal{P}(n)\longrightarrow \mathbb{N}_0$, where $f(\lambda)$ equals the multiplicity of $\chi^\lambda$ as an irreducible constituent of $(\triv_{P_n})\up^{\fS_n}$ ?
% =\left\langle \chi^\lambda, (\triv_{P_n})\up^{\fS_n}\right\rangle$ ?
\end{Question}

\medskip

A second consequence of Theorem A is a precise description of the constituents of the permutation character $(\triv_{Q_n})\up^{\fA_n}$, where $\fA_n$ is the alternating group of degree $n$ and $Q_n$ is a Sylow $p$-subgroup of $\fA_n$. We recall that the ordinary irreducible characters of the alternating group $\mathfrak{A}_n$ can be labelled as $\Irr(\mathfrak{A}_n) = \{\chi^\lambda\down_{\mathfrak{A}_n}\ |\ \lambda\ne\lambda'\in\mathcal{P}(n) \}\ \cup\ \{ \chi^{\lambda\pm}\ |\ \lambda=\lambda'\in\mathcal{P}(n) \}$, where $\lambda'$ is the partition conjugate to $\lambda$ (see \cite[Chapter 2.5]{JK}). 

\begin{thmB}
Let $p\ge 5$ be a prime, let $n$ be a natural number and let $\psi\in\Irr(\mathfrak{A}_n)$. Then $\psi$ is not an irreducible constituent of $(\triv_{Q_n})\up^{\mathfrak{A}_n}$ if and only if $n=p^k$ for some $k\in\mathbb{N}$ and $\psi=\chi^\lambda\down_{\mathfrak{A}_n}$ with $\lambda\in\{(p^k-1,1), (2,1^{p^k-2}) \}$.

If $p=3$, then $\psi\in\Irr(\mathfrak{A}_n)$ is not an irreducible constituent of $(\triv_{Q_n})\up^{\mathfrak{A}_n}$ if and only if $n=3^k$ for some $k\ge 2$ and $\psi=\chi^\lambda\down_{\mathfrak{A}_n}$ with $\lambda\in\{(3^k-1,1), (2,1^{3^k-2}) \}$, or $n\le 10$ and $\psi\in\{\chi^{(2,1)\pm}, \chi^{(2,2)\pm}, \chi^{(3,2,1)\pm}, \chi^\lambda\down_{\mathfrak{A}_n} \}$ where $\lambda\in\{(5,4),(2^4,1),(4,3,2),(3^2,2,1),(5^2),(2^5) \}$.
\end{thmB}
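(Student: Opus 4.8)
The plan is to deduce the statement from Theorem A by comparing the $\fA_n$-permutation character on the cosets of $Q_n$ with the $\fS_n$-permutation character on the cosets of $P_n$. Since $p$ is odd, every $p$-element of $\fS_n$ is a product of $p$-cycles and fixed points and hence an even permutation, so a Sylow $p$-subgroup $P_n$ of $\fS_n$ already lies in $\fA_n$; as $|\fS_n|_p=|\fA_n|_p$ we may take $Q_n=P_n$. Writing $\pi:=(\triv_{P_n})\up^{\fA_n}$, conjugation by an odd permutation $\sigma$ carries $\pi$ to the permutation character of $\fA_n$ on the cosets of the Sylow $p$-subgroup ${}^{\sigma}\!P_n$, and since any two Sylow $p$-subgroups of $\fA_n$ are $\fA_n$-conjugate this is again $\pi$. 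Thus $\pi$ is $\fS_n$-stable, and because $[\fS_n:\fA_n]=2$ and $\pi\up^{\fS_n}=(\triv_{P_n})\up^{\fS_n}$, ordinary Clifford theory for an index-$2$ subgroup yields
$$(\triv_{P_n})\up^{\fS_n}\down_{\fA_n}\ =\ 2\,(\triv_{Q_n})\up^{\fA_n}.$$

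Combining this identity with Frobenius reciprocity converts membership in $(\triv_{Q_n})\up^{\fA_n}$ into a question about $\fS_n$. If $\lambda\ne\lambda'$ and $\psi=\chi^\lambda\down_{\fA_n}$, then $\psi\up^{\fS_n}=\chi^\lambda+\chi^{\lambda'}$, so $2\langle\psi,(\triv_{Q_n})\up^{\fA_n}\rangle=\langle\chi^\lambda+\chi^{\lambda'},(\triv_{P_n})\up^{\fS_n}\rangle$; hence $\psi$ is a constituent of $(\triv_{Q_n})\up^{\fA_n}$ if and only if at least one of $\chi^\lambda,\chi^{\lambda'}$ is a constituent of $(\triv_{P_n})\up^{\fS_n}$. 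If $\lambda=\lambda'$ and $\psi=\chi^{\lambda\pm}$, then $\psi\up^{\fS_n}=\chi^\lambda$, so $2\langle\psi,(\triv_{Q_n})\up^{\fA_n}\rangle=\langle\chi^\lambda,(\triv_{P_n})\up^{\fS_n}\rangle$; hence $\chi^{\lambda\pm}$ is a constituent of $(\triv_{Q_n})\up^{\fA_n}$ if and only if $\chi^\lambda$ is a constituent of $(\triv_{P_n})\up^{\fS_n}$ (so in this case the two multiplicities agree and the $\fS_n$-multiplicity is even).

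It remains to feed in Theorem A. Let $B\subseteq\mathcal{P}(n)$ be the set of partitions excluded there, namely $\{(p^k-1,1),(2,1^{p^k-2})\}$ when $n=p^k$, together with the finite list $(2,1),(2,2),(3,2,1),(5,4),(2^4,1),(4,3,2),(3^2,2,1),(5^2),(2^5)$ of extra partitions at $p=3$. The key observation is that $B$ is closed under conjugation: $(p^k-1,1)$ and $(2,1^{p^k-2})$ are mutually conjugate (and distinct once $p^k>3$), while among the small exceptions $(2,1),(2,2),(3,2,1)$ are self-conjugate and the rest occur in conjugate pairs $(5,4)\!\leftrightarrow\!(2^4,1)$, $(4,3,2)\!\leftrightarrow\!(3^2,2,1)$, $(5^2)\!\leftrightarrow\!(2^5)$. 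By the two equivalences above, for $\lambda\ne\lambda'$ the character $\chi^\lambda\down_{\fA_n}$ fails to be a constituent precisely when both $\chi^\lambda$ and $\chi^{\lambda'}$ are non-constituents for $\fS_n$, i.e. when $\lambda\in B$; and for $\lambda=\lambda'$ the characters $\chi^{\lambda\pm}$ fail precisely when $\chi^\lambda$ is a non-constituent for $\fS_n$, i.e. when $\lambda\in B$. Reading off the self-conjugate partitions $(2,1)$ at $n=3$, $(2,2)$ at $n=4$, $(3,2,1)$ at $n=6$, the conjugate pairs at $n=9,10$, and the pairs $(p^k-1,1),(2,1^{p^k-2})$ at $n=p^k$ — and noting that for $p=3$ the case $n=3=3^1$ is the self-conjugate partition $(2,1)$ already listed among the $n\le 10$ exceptions, which is why one writes $k\ge 2$ there — gives exactly the asserted list.

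Since Theorem A carries the real weight, I do not expect a serious obstacle; the only place demanding care is the final paragraph, namely checking that Theorem A's exceptional set is genuinely conjugation-closed (so that no "half" of a conjugate pair is overlooked) and correctly bookkeeping the small $p=3$ cases together with the $n=3$ overlap.
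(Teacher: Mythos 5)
Your proposal is correct and follows the same route as the paper, which simply notes that for odd $p$ one may take $Q_n=P_n\le\fA_n$ and then deduces the statement from Theorem A; you have merely written out the details (the identity $(\triv_{P_n})\up^{\fS_n}\down_{\fA_n}=2(\triv_{Q_n})\up^{\fA_n}$, the two Clifford cases, and the check that the exceptional set is conjugation-closed) that the paper leaves implicit. No issues.
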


Theorem C follows immediately from Theorem A by observing that when $p$ is odd, $Q_n$ is a Sylow $p$-subgroup of $\fS_n$.

\medskip

We conclude by mentioning that Theorem A gives information on the eigenvalues of the irreducible representations of $\fS_n$, at elements of odd prime power order.
This may already be known to experts, but we were not able to find a reference in the literature.

\begin{corD}
Let $p\ge 5$ be a prime and let $n$ be a natural number. Let $\lambda\in\mathcal{P}(n)$ and let $\Xi^\lambda$ be the representation of $\mathfrak{S}_n$ affording $\chi^\lambda$. If $n$ is not a power of $p$, or if $n=p^k$ but $\lambda\notin\{(p^k-1,1),(2,1^{p^k-2})\}$, then $\Xi^\lambda(g)$ has an eigenvalue equal to $1$ for any $g\in\mathfrak{S}_n$ of prime power order.
\end{corD}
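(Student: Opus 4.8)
The plan is to deduce Corollary~D from Theorem~A via the standard equivalence between $1$ being an eigenvalue of $\Xi^\lambda(g)$ and the trivial character appearing in the restriction of $\chi^\lambda$. First I would record the elementary fact that, since $g$ has finite order, $\Xi^\lambda(g)$ is diagonalisable with roots of unity as eigenvalues; hence $\Xi^\lambda(g)$ has $1$ as an eigenvalue if and only if the $\mathbb{C}\fS_n$-module $V^\lambda$ affording $\chi^\lambda$ has a nonzero vector fixed by $g$, equivalently $\langle \chi^\lambda\down_{\langle g\rangle},\triv_{\langle g\rangle}\rangle\ge 1$. I would also note that it suffices to exhibit \emph{some} subgroup $H$ with $g\in H\le\fS_n$ for which $\langle\chi^\lambda\down_{H},\triv_{H}\rangle\ge 1$, since any nonzero $H$-fixed vector of $V^\lambda$ is automatically fixed by $g$.

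Next I would use that $g$ has $p$-power order. The case $g=1$ is trivial, so suppose $g\ne 1$; then $\langle g\rangle$ is a nontrivial $p$-subgroup of $\fS_n$ and is therefore contained in some $P_n\in\Syl_p(\fS_n)$. Taking $H=P_n$ in the previous paragraph and applying Frobenius reciprocity,
\[
\big\langle\chi^\lambda\down_{P_n},\triv_{P_n}\big\rangle=\big\langle\chi^\lambda,(\triv_{P_n})\up^{\fS_n}\big\rangle ,
\]
so it is enough to know that $\chi^\lambda$ is an irreducible constituent of $(\triv_{P_n})\up^{\fS_n}$. Under the stated hypothesis on the pair $(n,\lambda)$ --- that $n$ is not a power of $p$, or $n=p^k$ with $\lambda\notin\{(p^k-1,1),(2,1^{p^k-2})\}$ --- this is exactly what Theorem~A asserts, and we are done. (The same reduction, applied to an odd prime $q\ne p$, handles $g$ of $q$-power order whenever $(n,\lambda)$ avoids the exceptional list for $q$ supplied by Theorem~A.)

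Once Theorem~A is available there is no real obstacle here; the only points needing care are the translation from ``$1$ is an eigenvalue of $\Xi^\lambda(g)$'' into a restriction-multiplicity statement, and the decision to replace the possibly large cyclic group $\langle g\rangle$ by a Sylow $p$-subgroup containing it, which is precisely the setting of Theorem~A. As a consistency check I would also verify that the excluded pairs are genuinely excluded: if $n=p^k$ and $\lambda=(p^k-1,1)$, or its conjugate $(2,1^{p^k-2})$ (a $p^k$-cycle having sign $+1$), then for $g$ a $p^k$-cycle the operator $\Xi^\lambda(g)$ acts on $V^{(p^k-1,1)}$, the nontrivial irreducible constituent of the natural permutation module $\mathbb{C}^{p^k}$, with eigenvalues exactly the nontrivial $p^k$-th roots of unity, so $1$ does not occur; this reproduces the exceptional list of Theorem~A and shows that the hypothesis cannot be dropped.
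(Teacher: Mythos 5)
Your argument is correct and is exactly the intended deduction: since $g\ne 1$ of $p$-power order lies in some $P_n\in\Syl_p(\fS_n)$, a nonzero $P_n$-fixed vector in the module affording $\chi^\lambda$ (guaranteed by Theorem~A together with Frobenius reciprocity) is a fortiori fixed by $g$, and as $\Xi^\lambda(g)$ is diagonalisable of finite order this yields the eigenvalue $1$; the paper offers no further detail for this corollary, so your write-up matches its (implicit) proof. One caveat: your main argument only covers $g$ of order a power of the fixed prime $p$, and your own parenthetical shows why the literal wording ``any $g$ of prime power order'' cannot be reached this way --- for $g$ of order $q^m$ with $q\ne p$ the stated hypothesis on $(n,\lambda)$ does not exclude the $q$-exceptional pairs (for instance $n=q^k$, $\lambda=(n-1,1)$ and $g$ an $n$-cycle give no eigenvalue $1$ even though $n$ is not a power of $p$), and $q=2$ lies outside the scope of Theorem~A altogether. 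So the corollary should be read, as the sentence preceding it in the paper suggests, as a statement about elements of order a power of the given odd prime $p$; with that reading your proof is complete.
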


%A similar statement (with exceptions for small $n$) holds also when $p=3$. 
We remark that an analogous study was done extensively in \cite{ZZ} in the case of Chevalley groups.
The case of elements of prime order was discussed in \cite{Z} for quasi-simple groups.

%%%%%%%%%%%%%%%%%%%%%%%%%%%%%%%%%%%%%%%%%%%%%%%%%%%%%%%%%%%%%%%%%%%%%%%%%

\section{Notation and Preliminaries}\label{sec:2}
\subsection{Characters of wreath products}
Let $G$ be a finite group and let $R$ be a subgroup of $\fS_n$ for some $n\in\mathbb{N}$. We denote by $G^{\times n}$ the external direct product of $n$ copies of $G$. The natural action of $\fS_n$ on the direct factors of $G^{\times n}$ induces an action via automorphisms of $\fS_n$ (and therefore of $R\le \fS_n$) on $G^{\times n}$, giving the wreath product $G\wr R:= G^{\times n}\rtimes R$. As in \cite[Chapter 4]{JK}, we denote the elements of $G\wr R$ by $(g_1,\dotsc,g_n;r)$ for $g_i\in G$ and $r\in R$. Let $V$ be a $\mathbb{C}G$-module affording the character $\phi$, with $\mathbb{C}$-basis $e_1,\dotsc,e_d$. We let $V^{\otimes n}:=\underbrace{V\otimes\cdots\otimes V}_n$ be the corresponding $\mathbb{C}G^{\times n}$-module. The left action of $G\wr R$ on $V^{\otimes n}$ defined by
$$e_{i_1}\otimes\cdots\otimes e_{i_n}\mapsto g_1(e_{i_{r^{-1}(1)}})\otimes\cdots\otimes g_n(e_{i_{r^{-1}(n)}}),\qquad i_1,\dotsc,i_n\in\{1,\dotsc,d\}$$
for any element $(g_1,\dotsc,g_n;r)\in G\wr R$ turns $V^{\otimes n}$ into a $\mathbb{C}(G\wr R)$-module. We denote by $\hat{\phi}$ the character afforded by $V^{\otimes n}$ as a $\mathbb{C}(G\wr R)$-module. For any ordinary character $\psi$ of $R$, we let $\psi$ also denote its inflation to $G\wr R$ and let
$$\mathcal{X}(\phi;\psi):=\hat{\phi}\cdot\psi$$
be the ordinary character of $G\wr R$ obtained as the inner tensor product of $\hat{\phi}$ and $\psi$.

Let $\phi\in\Irr(G)$ and let $\phi^{\times n}:=\phi\times\cdots\times\phi$ denote the corresponding irreducible character of $G^{\times n}$. From the description of irreducible characters of wreath products, for example in \cite[Chapter 4]{JK}, we deduce that $\hat{\phi}\in\Irr(G\wr R)$ is an extension of $\phi^{\times n}$. For $\psi\in\Irr(R)$ we have that
$$\mathcal{X}(\phi;\psi)=\hat{\phi}\cdot\psi\in\Irr(G\wr R\ |\ \phi^{\times n}),$$
where $\Irr(G\wr R\ |\ \phi^{\times n})$ is the set of irreducible characters $\chi$ of $G\wr R$ whose restriction $\chi\down_{G^{\times n}}$ contains $\phi^{\times n}$ as an irreducible constituent. Indeed, Gallagher's Theorem \cite[Corollary 6.17]{I} gives
$$\Irr(G\wr R\ |\ \phi^{\times n}) = \{\mathcal{X}(\phi;\psi)\ |\ \psi\in\Irr(R)\}.$$
More generally, if $H\le G$ and $\psi\in\Irr(H)$ then we denote by $\Irr(G\ |\ \psi)$ the set of characters $\chi\in\Irr(G)$ such that $\psi$ is an irreducible constituent of the restriction $\chi\down_H$.

\subsection{The representation theory of $\fS_n$ and their Sylow $p$-subgroups}
For each $n\in\mathbb{N}$, $\mathrm{Irr}(\fS_n)$ is naturally in bijection with $\mathcal{P}(n)$, the set of all partitions of $n$. For $\lambda\in\mathcal{P}(n)$ (also written $\lambda\vdash n$), the corresponding irreducible character is denoted by $\chi^\lambda$. We recall that the Young diagram $[\lambda]$ corresponding to the partition $\lambda=(\lambda_1,\lambda_2,\dotsc,\lambda_k)$ is the subset of the plane defined by: 
$$[\lambda]=\{(i,j)\in{\mathbb N}\times{\mathbb N}\mid 1\leq i\leq k,\ 1\leq j\leq\lambda_i\},$$
where we view the diagram in matrix orientation, with the node $(1,1)$ in the upper left corner. For $\lambda$ a partition, $\lambda'$ denotes the partition conjugate to $\lambda$. The size of $\lambda$ is denoted by $|\lambda|$; that is, $\lambda\vdash|\lambda|$.

\smallskip

Let $n\in\mathbb{N}$. Throughout this article, $p$ denotes a fixed odd prime and $P_n$ denotes a Sylow $p$-subgroup of $\fS_n$. We recall some facts about Sylow subgroups of symmetric groups, and refer the reader to \cite[Chapter 4]{JK} for a more detailed discussion. Clearly $P_1$ is the trivial group while $P_p$ is cyclic of order $p$. More generally, $P_{p^i}= (P_{p^{i-1}})^{\times p}\rtimes P_p=P_{p^{i-1}}\wr P_p\cong P_p\wr \cdots \wr P_p$ ($p$-fold wreath product).
Let $n=\sum_{i=0}^t b_ip^i$ be the $p$-adic expansion of $n$. Then $P_n\cong P_{p^0}^{\times b_0}\times P_{p^1}^{\times b_1}\times\cdots\times P_{p^t}^{\times b_t}.$

\subsection{The Littlewood\textendash Richardson Rule}
Let $m,n\in\mathbb{N}$ with $m<n$. For $\mu\vdash m$ and $\nu\vdash n-m$, the Littlewood\textendash Richardson rule (see \cite[Chapter 16]{J}) describes the decomposition into irreducible constituents of induced character
$$(\chi^\mu\times\chi^\nu)\up^{\fS_n}_{\fS_m\times \fS_{n-m}}.$$

Before we recall the Littlewood\textendash Richardson rule, we introduce some notation and technical definitions. By a skew shape $\gamma$ we mean a set difference of Young diagrams $[\lambda\setminus\mu]$ for some partitions $\lambda$ and $\mu$ with $[\mu]\subsetneq[\lambda]$, and $|\gamma|:=|\lambda|-|\mu|$. By convention, the highest row of $[\lambda]$ for a partition $\lambda$ is numbered 1, but the highest row of a skew shape $\gamma=[\lambda\setminus\mu]$ need not be the highest row of $[\lambda]$.

\begin{defn}
Let $\lambda=(\lambda_1,\dotsc,\lambda_k)\in\mathcal{P}(n)$ and let $\mathcal{C}=(c_1,\dotsc,c_n)$ be a sequence of positive integers. We say that $\mathcal{C}$ is of \emph{type $\lambda$} if
$$|\{i\in\{1,\dotsc,n\}\ :\ c_i=j\}| = \lambda_j$$
for all $j\in\{1,\dotsc,k\}$. We say that an element $c_j$ of $\mathcal{C}$ is \emph{good} if $c_j=1$ or if
$$|\{i\in\{1,2,\dotsc,j-1\}\ :\ c_i=c_j-1\}|>|\{i\in\{1,2,\dots,j-1\}\ :\ c_i=c_j\}|.$$
Finally, we say that the sequence $\mathcal{C}$ is \emph{good} if $c_j$ is good for every $j\in\{1,\dotsc,n\}$.
\end{defn}

\begin{thm}\label{thm:LR}[Littlewood\textendash Richardson rule]
Let $m,n\in\mathbb{N}$ with $m<n$. Let $\mu\vdash m$ and $\nu\vdash n-m$. Then
$$(\chi^\mu\times\chi^\nu)\up^{\fS_n}_{\fS_m\times \fS_{n-m}} = \sum_{\lambda\vdash n} c^\lambda_{\mu\nu}\ \chi^\lambda$$
where $c^\lambda_{\mu\nu}$ equals the number of ways to replace the nodes of $[\lambda\setminus\mu]$ by natural numbers such that
\begin{itemize}
\item[(i)] The sequence obtained by reading the numbers from right to left, top to bottom is a good sequence of type $\nu$;
\item[(ii)] The numbers are weakly increasing along rows;
\item[(iii)] The numbers are strictly increasing down columns.
\end{itemize}
\end{thm}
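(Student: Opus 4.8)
Since the statement is the classical Littlewood\textendash Richardson rule, in a final version one would simply cite \cite[Chapter~16]{J}; the following indicates how a self-contained proof runs. The plan is to pass from representations to symmetric functions via the characteristic map $\mathrm{ch}\colon\bigoplus_{n\ge 0}R(\fS_n)\to\Lambda$, the graded ring isomorphism onto the ring $\Lambda$ of symmetric functions that sends $\chi^\lambda$ to the Schur function $s_\lambda$ and transports the induction product $(-\times-)\!\up^{\fS_n}_{\fS_m\times\fS_{n-m}}$ to ordinary multiplication. Under this dictionary the theorem becomes the combinatorial identity $s_\mu s_\nu=\sum_\lambda c^\lambda_{\mu\nu}s_\lambda$, with $c^\lambda_{\mu\nu}$ the number of fillings of $[\lambda\setminus\mu]$ satisfying (i)\textendash(iii): conditions (ii) and (iii) say the filling is a semistandard skew tableau, while (i) is the lattice (\emph{ballot}, or \emph{Yamanouchi}) condition on the word read right to left along rows and top to bottom, which is exactly the notion of \emph{good sequence} of type $\nu$ isolated in the Definition above.

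To prove this identity I would use the tableau model $s_\lambda=\sum_T x^T$, summed over semistandard tableaux $T$ of shape $\lambda$, together with its skew analogue $s_{\lambda/\mu}=\sum_T x^T$ over semistandard tableaux of shape $[\lambda\setminus\mu]$. Since multiplication by $s_\mu$ and the skewing operator are adjoint for the Hall inner product, $c^\lambda_{\mu\nu}=\langle s_\mu s_\nu,s_\lambda\rangle=\langle s_\nu,s_{\lambda/\mu}\rangle$, so it suffices to show that the coefficient of $s_\nu$ in the Schur expansion of $s_{\lambda/\mu}$ equals the count in (i)\textendash(iii). This is the substantive step, to be carried out with Sch\"utzenberger's jeu de taquin: (a) every semistandard skew tableau rectifies to a well-defined straight-shaped semistandard tableau, independent of the sequence of slides \textemdash\ the confluence statement, cleanly obtained from the relations of the plactic monoid (Knuth equivalence); and (b) the number of semistandard skew tableaux of shape $[\lambda\setminus\mu]$ that rectify to a fixed straight tableau depends only on the shape $\nu$ of the latter, and taking that tableau to be the unique one of shape $\nu$ and content $\nu$ (every entry in row $i$ equal to $i$) identifies this number with the Littlewood\textendash Richardson count, because rectification preserves the plactic class and a skew tableau rectifies to that particular tableau precisely when its reverse reading word is a lattice word of type $\nu$. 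Grouping the monomials of $s_{\lambda/\mu}$ according to the rectification of the underlying tableau then gives $s_{\lambda/\mu}=\sum_\nu c^\lambda_{\mu\nu}s_\nu$, and applying the adjunction and $\mathrm{ch}^{-1}$ yields the stated decomposition.

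The main obstacle is precisely step~(a) together with its compatibility with the lattice condition in~(b): once jeu de taquin rectification is known to be well defined (equivalently, once confluence of Knuth moves is established), the rest is bookkeeping resting on the characteristic map and the tableau definition of Schur functions. An alternative route, the one actually followed in \cite[Chapter~16]{J}, avoids symmetric functions: one first proves Young's rule $M^\mu\cong\bigoplus_\lambda (S^\lambda)^{\oplus K_{\lambda\mu}}$ over $\mathbb{Q}$, then analyses the induced module $(S^\mu\times M^\nu)\!\up^{\fS_n}$ by means of semistandard homomorphisms and skew Specht modules, extracting the contribution of the Specht submodule $S^\nu\le M^\nu$ by downward induction along the dominance order; the combinatorial core there is again a unitriangular relation between the Littlewood\textendash Richardson coefficients and the Kostka numbers. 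Either way this is standard, so the reference to \cite[Chapter~16]{J} suffices in the paper itself.
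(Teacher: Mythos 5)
Your proposal is correct; the paper itself gives no proof of this statement and simply cites \cite[Chapter~16]{J}, exactly as you suggest doing in the final version. Your sketch of the standard argument (characteristic map plus jeu de taquin, or alternatively James's Specht-module route) is accurate, and your identification of the paper's ``good sequence'' condition with the usual lattice/ballot condition is the right reading.
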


Let $\nu$ be a partition. We call a way of replacing the nodes of a skew shape $\gamma$ with $|\nu|$ boxes by numbers satisfying conditions (i)-(iii) of Theorem~\ref{thm:LR} a \emph{Littlewood\textendash Richardson filling of $\gamma$ of type $\nu$}. It is easy to see that every skew shape has at least one Littlewood\textendash Richardson filling. Moreover, the coefficients described in Theorem~\ref{thm:LR} are symmetric: $c^\lambda_{\mu\nu}=c^\lambda_{\nu\mu}$ for all partitions $\mu$, $\nu$ and all partitions $\lambda\vdash |\mu|+|\nu|$.
For convenience, let $\mathcal{LR}(\gamma)$ denote the set of all possible types of Littlewood\textendash Richardson fillings of a skew shape $\gamma$. We write $\gamma\cong[\lambda]$ if $\gamma$ is a translation of the Young diagram of the partition $\lambda$, and we denote by $\gamma^\circ$ the $180^\circ$-rotation of $\gamma$ (up to translation).

We record below three useful lemmas.

\begin{lem}\label{lem:BK}\cite[Lemma 4.4]{BK}
Let $\mu$ and $\gamma$ be partitions such that $[\gamma]\subsetneq[\mu]$. The following are equivalent:
\begin{itemize}
\item[(i)] $|\mathcal{LR}([\mu\setminus\gamma])|=1$;
\item[(ii)] There is a unique Littlewood\textendash Richardson filling of $[\mu\setminus\gamma]$;
\item[(iii)] $[\mu\setminus\gamma]\cong[\nu]$ or $[\mu\setminus\gamma]^\circ\cong[\nu]$, for some partition $\nu\vdash|\mu|-|\gamma|$.
\end{itemize}
\end{lem}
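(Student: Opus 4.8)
We argue the cyclic chain of implications $(iii)\Rightarrow(ii)\Rightarrow(i)\Rightarrow(iii)$; throughout write $\sigma:=[\mu\setminus\gamma]$ for the skew shape under consideration. The implication $(ii)\Rightarrow(i)$ is immediate, since the type of a Littlewood--Richardson filling is part of the data of the filling.

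For $(iii)\Rightarrow(ii)$, first suppose $\sigma\cong[\nu]$ for some partition $\nu$. Filling the $i$th row of $\sigma$ (counted from the top of $\sigma$) entirely with the symbol $i$ yields a Littlewood--Richardson filling: condition (ii) of Theorem~\ref{thm:LR} is trivial, condition (iii) holds because $\sigma$ is a straight shape and so the entries read down any column are $1,2,3,\dots$, and the reading word is $1^{\nu_1}2^{\nu_2}\cdots$, which is good. It is the only one: reading right to left, the first symbol must be good, hence equal to $1$, so by condition (ii) the whole top row of $\sigma$ carries $1$'s; condition (iii) then forces every entry of the next row to be at least $2$, and one continues by induction on the row index. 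If instead $\sigma^\circ\cong[\nu]$, we use the classical identity $s_\sigma=s_{\sigma^\circ}$ of skew Schur functions (which follows from the content-reversing bijection sending a semistandard tableau with entries at most $N$ to its $180^\circ$-rotation with each entry $k$ replaced by $N+1-k$). Expanding both sides in the Schur basis, this says that for every partition $\rho$ the number of Littlewood--Richardson fillings of $\sigma$ of type $\rho$ equals the corresponding number for $\sigma^\circ$; since $\sigma^\circ\cong[\nu]$ has a unique Littlewood--Richardson filling (of type $\nu$) and none of any other type, the same holds for $\sigma$.

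We prove $(i)\Rightarrow(iii)$ in contrapositive form: assuming $\sigma$ is neither a translate of a straight Young diagram nor the $180^\circ$-rotation of one, we exhibit two Littlewood--Richardson fillings of $\sigma$ of distinct types, so that $|\mathcal{LR}(\sigma)|\ge 2$. We first reduce to the case that $\sigma$ is connected. Since $\mu$ and $\gamma$ are partitions, the boxes of $\sigma$ lying in any fixed row (or in any fixed column) form a contiguous segment; hence distinct connected components of $\sigma$ occupy pairwise disjoint sets of rows \emph{and} pairwise disjoint sets of columns, and therefore $s_\sigma$ is the product of the skew Schur functions of its components. A product of two or more nonzero Schur-positive symmetric functions of positive degree has at least two distinct terms in its Schur expansion, so if $\sigma$ is disconnected we are done. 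Thus assume $\sigma$ is connected, with (necessarily nonempty) rows indexed $a\le i\le b$, row $i$ occupying the columns $[\alpha_i,\beta_i]$. Because $\mu$ and $\gamma$ are partitions, $(\alpha_i)_{i=a}^b$ and $(\beta_i)_{i=a}^b$ are weakly decreasing, and connectedness forces consecutive intervals to overlap. Now $\sigma$ is a translate of a straight shape exactly when $(\alpha_i)$ is constant, and $\sigma$ is a rotation of a straight shape exactly when $(\beta_i)$ is constant; so our standing assumption says that \emph{both} sequences are non-constant, whence $\alpha_a>\alpha_b$ and $\beta_a>\beta_b$, and in particular $b>a$.

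It remains to produce the two fillings for such a connected $\sigma$, and this is the step I expect to require the most care. One route is to iterate the observation (valid for any skew shape) that the top row must be filled entirely with $1$'s in every Littlewood--Richardson filling, tracking carefully how the shape changes when this row is removed, so as to reduce the problem to a two-row skew shape with an overlap of at least one column and strictly positive overhang on each side; for such a shape, Pieri's rule applied to the removal of a horizontal strip shows at once that $s_\sigma$ is a sum of at least two Schur functions. An alternative, perhaps cleaner, route is to construct directly the dominance-minimal Littlewood--Richardson filling and then modify it at a corner of $\sigma$ created by a simultaneous left-drop of $(\alpha_i)$ and right-drop of $(\beta_i)$, legally increasing one entry to obtain a second filling whose type is strictly larger in dominance order. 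The main obstacle is making this local modification (or the shape-reduction) precise: one must check that it can always be carried out while preserving conditions (i)--(iii) of Theorem~\ref{thm:LR}, and handle the bookkeeping around the rows and columns of $\sigma$ lying above, below, or beside the relevant corner, including the possibility that the reduction disconnects the shape, which is already covered by the disconnected case.
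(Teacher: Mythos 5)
The paper does not prove this lemma at all: it is quoted verbatim from \cite[Lemma 4.4]{BK}, so there is no internal argument to compare yours against; what matters is whether your direct proof stands on its own. Your treatment of $(ii)\Rightarrow(i)$ and $(iii)\Rightarrow(ii)$ is fine (the uniqueness for a straight shape via Lemma~\ref{lem:filling} and column-strictness, and the rotation case via $s_\sigma=s_{\sigma^\circ}$, are both correct), and the reduction of $(i)\Rightarrow(iii)$ to connected shapes via the factorization of $s_\sigma$ over components and the observation that $s_\alpha s_\beta$ contains both $s_{\alpha+\beta}$ and $s_{\alpha\cup\beta}$ is sound.

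The genuine gap is exactly where you flag it: for a connected skew shape $\sigma$ in which neither the sequence of left endpoints $(\alpha_i)$ nor of right endpoints $(\beta_i)$ is constant, you never actually produce two Littlewood--Richardson fillings of distinct types. Both proposed routes are left as plans. The ``strip off the top row'' route does not obviously terminate in a two-row shape with positive overhang on both sides: after deleting the top row (filled with $1$'s) the remaining shape may again be straight, or a rotated straight shape, or disconnected, and the lattice condition on the reading word of the lower rows still interacts with the deleted row of $1$'s, so the induction you gesture at needs a precise statement of what is being inducted on and why the base case is ever reached. The ``modify the dominance-minimal filling at a corner'' route is closer to how such results are usually proved, but it requires (a) an explicit description of a canonical filling, and (b) a verification that at a simultaneous drop of $(\alpha_i)$ and $(\beta_i)$ one can raise an entry while preserving conditions (i)--(iii) of Theorem~\ref{thm:LR}, including the goodness of the reading word across later rows; neither is carried out, and this verification is the entire combinatorial content of the implication $(i)\Rightarrow(iii)$. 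As written, the proposal proves two of the three implications and sketches, but does not establish, the third; to be complete you must either execute one of these constructions in detail or fall back on the citation to \cite{BK}, as the paper does.
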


\begin{lem}\label{lem:filling}
Let $\gamma$ be a skew shape. Suppose the non-empty rows of $\gamma$ are numbered $1\le r_1<r_2<\dotsc<r_t$. Then in any Littlewood\textendash Richardson filling of $\gamma$, the boxes in row $r_i$ can only be filled with numbers from $\{1,2,\dotsc,i\}$, for all $1\le i\le t$.
\end{lem}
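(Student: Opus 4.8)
The plan is to establish, by induction on $i$, the equivalent statement that every box lying in one of the rows $r_1,\dots,r_i$ of $\gamma$ is filled with a number at most $i$; the Lemma is then immediate, since the entries are positive integers, so a box in row $r_i$ receives a number in $\{1,\dots,i\}$. Only two of the three conditions defining a Littlewood--Richardson filling will be needed: condition (ii), that the entries weakly increase along each row, and condition (i), that the word read from right to left and top to bottom is a good sequence. The crucial elementary observation is that, since a row's entries weakly increase from left to right, the largest entry of a row is its rightmost one, hence the first entry of that row to be read; so in the reading word the maximal entry of each row appears at the very start of the block contributed by that row, exactly the position where the lattice-type condition ``good'' is most restrictive.

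For the base case $i=1$, the topmost non-empty row $r_1$ is read first, and by the observation above the first letter of the whole reading word equals the largest entry $v$ of row $r_1$; since this letter occupies the first position, goodness forces $v=1$ (the prefix before it is empty), so every entry of row $r_1$ equals $1$. For the inductive step, assume every entry in rows $r_1,\dots,r_{i-1}$ is at most $i-1$, let $v$ be the largest entry of row $r_i$, and suppose for contradiction that $v\ge i+1$. The portion of the reading word coming from rows $r_1,\dots,r_{i-1}$ uses only letters $\le i-1<v$, while the first letter contributed by row $r_i$ is $v$; hence the first occurrence of $v$ in the reading word is the letter immediately following that initial block. As $v\ge 2$, goodness of this letter requires at least one letter equal to $v-1$ among the preceding positions, all of which lie in rows $r_1,\dots,r_{i-1}$; but those rows contain no entry as large as $v-1\ge i$, a contradiction. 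Therefore $v\le i$, which completes the induction.

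I do not expect a genuine obstacle here: the argument is short and uses only the definitions. The one point deserving care is the bookkeeping around the reading word, namely verifying that nothing from rows $r_1,\dots,r_{i-1}$ (which may be interspersed with empty rows contributing no letters) carries the label $v$, and that the rightmost box of row $r_i$ is genuinely the first letter of its block; but both follow at once from condition (ii) together with the inductive hypothesis.
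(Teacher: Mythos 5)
Your proof is correct: the induction on $i$, using the weak increase along rows to identify the rightmost (hence first-read) entry of each row as its maximum and then invoking the goodness condition at that position, is a complete and accurate elaboration of exactly the argument the paper dismisses as ``immediate from conditions (i)--(iii)''. No gaps; the bookkeeping about the reading word is handled correctly.
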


\begin{proof}
This is immediate from conditions (i)-(iii) of Theorem~\ref{thm:LR}.
\end{proof}

\begin{lem}\label{lem:filling-type-(m-1,1)}
Let $\gamma$ be a skew shape and let $m=|\gamma|\ge 4$. Suppose $(m-1,1)\in\mathcal{LR}(\gamma)$. Then one of the following holds:

(i) $\gamma\cong[(m-1,1)]$ or $\gamma^\circ\cong[(m-1,1)]$;

(ii) $\mathcal{LR}(\gamma)\cap \{ (m), (m-2,2), (m-2,1,1)\} \ne \emptyset$.
\end{lem}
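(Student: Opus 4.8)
The plan is to analyze the shape of $\gamma$ directly, using the constraints imposed by the existence of a Littlewood--Richardson filling of type $(m-1,1)$. First I would invoke Lemma~\ref{lem:BK}: if $|\mathcal{LR}(\gamma)| = 1$, then since $(m-1,1) \in \mathcal{LR}(\gamma)$ we get $\gamma \cong [(m-1,1)]$ or $\gamma^\circ \cong [(m-1,1)]$, which is conclusion (i). So I may assume $|\mathcal{LR}(\gamma)| \ge 2$, and the goal becomes producing one of $(m)$, $(m-2,2)$, $(m-2,1,1)$ in $\mathcal{LR}(\gamma)$.

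Next I would use Lemma~\ref{lem:filling} to pin down the row structure. A filling of type $(m-1,1)$ uses the symbol $1$ exactly $m-1$ times and the symbol $2$ exactly once; by Lemma~\ref{lem:filling} (applied with the symbol $2$ forced out of the top non-empty row) the shape $\gamma$ has at most two non-empty rows, and if it has exactly two non-empty rows then the lower one contains exactly one box (the unique $2$). Combined with the column-strictness condition (iii) — which forbids two boxes of the same symbol in one column, so the top row of $\gamma$ has no two boxes in the same column — this says $\gamma$ is, up to translation, either a single horizontal strip of $m$ boxes (one row), or a horizontal strip of $m-1$ boxes with one extra box sitting strictly below it, attached in a column either overlapping the strip or adjacent to its left/right end. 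I would enumerate these few configurations. The single-row case gives $(m) \in \mathcal{LR}(\gamma)$ directly, landing in (ii). In the two-row case, the relative horizontal position of the lone lower box determines $\mathcal{LR}(\gamma)$: if the lower box is directly below some box of the strip (not the extreme end that would force a column of length $2$ — but there are no columns of length $2$ since the upper part is a horizontal strip), or adjacent to an end, one reads off that the possible LR types are governed by whether the lower box can be filled with $1$ or must be filled with $2$, yielding $\{(m-1,1),(m)\}$, $\{(m-1,1),(m-2,2)\}$, or $\{(m-1,1),(m-2,1,1)\}$ (and sometimes more), but in every genuinely two-row case at least one of $(m)$, $(m-2,2)$, $(m-2,1,1)$ appears.

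The main obstacle I anticipate is the bookkeeping in the two-row case: carefully checking, for each way the lone lower box can be glued to the horizontal strip (directly below an interior box, below an end box, or diagonally off an end), exactly which of the three target types is forced into $\mathcal{LR}(\gamma)$, and confirming that the only configurations producing $|\mathcal{LR}(\gamma)| = 1$ are precisely the ones already excluded via Lemma~\ref{lem:BK} — i.e.\ $\gamma \cong [(m-1,1)]$ and its rotation. The hypothesis $m \ge 4$ is what guarantees the strip is long enough that these cases are distinct and that no small-shape coincidence collapses the argument; I would flag where $m \ge 4$ is used. Once the configurations are enumerated the verification of each is a short direct check using conditions (i)--(iii) of Theorem~\ref{thm:LR}, so the proof reduces to a finite, explicit case analysis on the geometry of $\gamma$.
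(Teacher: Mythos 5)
Your reduction via Lemma~\ref{lem:BK} is fine, but the structural claim at the heart of your case analysis is false, and this is a genuine gap. Lemma~\ref{lem:filling} says that the boxes in the $i$-th non-empty row can only carry entries from $\{1,\dotsc,i\}$; it is an upper bound on the entries, not a statement that row $i$ must contain the entry $i$. It therefore does \emph{not} follow from the existence of a filling of type $(m-1,1)$ that $\gamma$ has at most two non-empty rows, nor that a second non-empty row consists of the single box carrying the $2$. For a concrete counterexample, take $\gamma=[(4,3,2,1)\setminus(3,2,1)]$: it has four boxes in four distinct rows and columns, and the filling reading $1,1,1,2$ from top to bottom is a Littlewood--Richardson filling of type $(3,1)$. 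Similarly, a disjoint union of a row of $m-2$ boxes and a separate column of two boxes spans three rows and admits a type-$(m-1,1)$ filling; this configuration is essential because for it $\mathcal{LR}(\gamma)=\{(m-1,1),(m-2,1,1)\}$, so it is precisely the case that forces $(m-2,1,1)$ into the statement of the lemma, and your enumeration (one horizontal strip, or a strip with one extra box below it) omits it entirely, along with all other disconnected shapes.

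The correct constraints extracted from $(m-1,1)\in\mathcal{LR}(\gamma)$ are \emph{column} constraints: since only the symbols $1$ and $2$ occur and columns are strictly increasing, no column of $\gamma$ contains three boxes, and since the symbol $2$ occurs once, at most one column contains two boxes. This is how the paper's proof proceeds: if no column has two boxes then $(m)\in\mathcal{LR}(\gamma)$; otherwise exactly one connected component spans two consecutive rows and all others are single rows, and a short analysis of that component (using that (i) fails, and distinguishing whether the two-row component is just a domino) produces $(m-2,1,1)$ or $(m-2,2)$. You would need to replace your row-based enumeration with an analysis of this kind; as written, the argument does not cover the shapes for which conclusion (ii) is nontrivial.
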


\begin{proof}
Since $(m-1,1)\in\mathcal{LR}(\gamma)$, no three boxes of $\gamma$ lie in the same column, and $\gamma$ has at most one column containing two boxes. Suppose (i) does not hold. Then $(m)\in\mathcal{LR}(\gamma)$ if (a) no two boxes of $\gamma$ lie in the same column; or $(m-2,1,1)\in\mathcal{LR}(\gamma)$ if (b) $\gamma$ has precisely two connected components, one of which is a row of $m-2$ boxes and the other of which is a column of two boxes. 

Now assume $\gamma$ satisfies neither (a) nor (b). Then $\gamma$ has a unique connected component $\delta$ whose boxes lie in exactly two rows, say rows $j$ and $j+1$, and each of the other components lies entirely within one row. Moreover, if $\delta=\gamma$ is the unique connected component then $\delta$ has at least two boxes in each of rows $j$ and $j+1$, while if $\delta$ contains only two boxes then by assumption $\gamma$ has at least three connected components. In all instances, $(m-2,2)\in\mathcal{LR}(\gamma)$.
\end{proof}

%%%%%%%%%%%%%%%%%%%%%%%%%%%%%%%%%%%%%%%%%%%%%%%%%%%%%%%%%%%%%%%%%%%%%%%%%

\section{The prime power case}\label{sec:outline}

Let $p$ be an odd prime. 
The aim of this section is to prove Theorem A for $n=p^k$. As we will see, this is the crucial part of the article. In fact, the complete statement for all natural numbers follows relatively easily from the prime power case. 
\begin{defn}
From now on we let $\Delta(p^k)=\mathcal{P}(p^k)\smallsetminus \{(p^k-1,1), (2,1^{p^k-2}) \}.$
\end{defn}
For the convenience of the reader, we state here the main object of our section. 
\begin{thm}\label{thm:n-prime-power}
Let $k\in\mathbb{N}$ and $\lambda\vdash p^k\ne 9$. Then $\innprod{\lambda}{P_{p^k}} = 0$ if and only if $\lambda\notin\Delta(p^k)$. If $p^k=9$ then $\innprod{\lambda}{P_9} = 0$ if and only if $\lambda\in\{(8,1),(5,4),(4,3,2),(3^2,2,1),(2^4,1),(2,1^7)\}.$
\end{thm}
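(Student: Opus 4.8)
I would argue by induction on $k$, with the wreath product decomposition $P_{p^k}=P_{p^{k-1}}\wr P_p=(P_{p^{k-1}})^{\times p}\rtimes P_p$ and the Littlewood--Richardson rule (Theorem~\ref{thm:LR}) as the engine of the inductive step, and the $9$-exception handled by a separate finite computation. \emph{The easy direction} (that $(p^k-1,1)$ and $(2,1^{p^k-2})$ do not occur) is quick: since $P_p\cong C_p$ is transitive on $p$ points, induction on $k$ gives that $P_{p^k}$ is transitive on $\{1,\dots,p^k\}$, so the natural permutation character $\triv_{\fS_{p^k}}+\chi^{(p^k-1,1)}$ restricts to $P_{p^k}$ with a single trivial constituent, whence $\innprod{(p^k-1,1)}{P_{p^k}}=0$; and as every element of the odd-order group $P_{p^k}$ is an even permutation, $\mathrm{sgn}\down_{P_{p^k}}=\triv_{P_{p^k}}$, so $\chi^{(2,1^{p^k-2})}\down_{P_{p^k}}=(\chi^{(p^k-1,1)}\otimes\mathrm{sgn})\down_{P_{p^k}}=\chi^{(p^k-1,1)}\down_{P_{p^k}}$ also misses $\triv_{P_{p^k}}$. \emph{The base case} $k=1$ is a direct computation in $P_p\cong C_p$: Frobenius reciprocity gives $\innprod{\lambda}{P_p}=\tfrac1p(\chi^\lambda(1)+(p-1)\chi^\lambda(\sigma))$ for a $p$-cycle $\sigma$; if $\lambda\vdash p$ is not a hook then $\chi^\lambda(\sigma)=0$ and $p\mid\chi^\lambda(1)$, so this is $\chi^\lambda(1)/p\ge 1$, while if $\lambda=(p-a,1^a)$ then $\chi^\lambda(1)=\binom{p-1}{a}$, $\chi^\lambda(\sigma)=(-1)^a$, and a one-line computation shows the value is $0$ exactly for $a\in\{1,p-2\}$ and $\ge 1$ otherwise, matching $\Delta(p)$.

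\textbf{The key reduction.} Writing $d_\mu^{(j)}:=\innprod{\mu}{P_{p^j}}$ for $\mu\vdash p^j$, the crux of the inductive step is the identity, for $\mu\vdash p^{k-1}$ and $\theta\vdash p$,
\[
\langle\mathcal{X}(\chi^\mu;\chi^\theta)\down_{P_{p^k}},\triv\rangle \;=\; d_\mu^{(k-1)}\,\langle\chi^\theta\down_{P_p},\triv_{P_p}\rangle\;+\;\frac{(d_\mu^{(k-1)})^{p}-d_\mu^{(k-1)}}{p}\,\chi^\theta(1),
\]
obtained by restricting $\widehat{\chi^\mu}$ to the base group $(P_{p^{k-1}})^{\times p}$: its trivial-isotypic space is $U^{\otimes p}$ with $\dim U=d_\mu^{(k-1)}$ and $P_p$ permuting the factors, so $U^{\otimes p}\cong d_\mu^{(k-1)}\cdot\triv_{P_p}\oplus\tfrac1p((d_\mu^{(k-1)})^{p}-d_\mu^{(k-1)})\cdot\mathbb{C}P_p$ as $P_p$-modules, and one then tensors with $\chi^\theta\down_{P_p}$. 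Feeding this (via Gallagher's theorem, Frobenius reciprocity, and the branching rule $\chi^\lambda\down_{(\fS_{p^{k-1}})^{\times p}}=\sum_{(\mu^{(1)},\dots,\mu^{(p)})}c^\lambda_{\mu^{(1)},\dots,\mu^{(p)}}\,\chi^{\mu^{(1)}}\times\cdots\times\chi^{\mu^{(p)}}$) into the analysis of the $P_p$-action on the trivial-isotypic space of $\chi^\lambda\down_{(P_{p^{k-1}})^{\times p}}$ yields the exact formula
\[
d_\lambda^{(k)}\;=\;\sum_{\mu\vdash p^{k-1}}\sum_{\theta\vdash p} c^\lambda_{\mu,\dots,\mu;\,\theta}\,\langle\mathcal{X}(\chi^\mu;\chi^\theta)\down_{P_{p^k}},\triv\rangle\;+\;\sum_{[(\mu^{(1)},\dots,\mu^{(p)})]}c^\lambda_{\mu^{(1)},\dots,\mu^{(p)}}\prod_{i=1}^{p} d_{\mu^{(i)}}^{(k-1)},
\]
where $c^\lambda_{\mu,\dots,\mu;\,\theta}:=\langle\chi^\lambda,\mathcal{X}(\chi^\mu;\chi^\theta)\up^{\fS_{p^k}}\rangle$ and the last sum runs over the $\fS_p$-orbits of \emph{non-constant} $p$-tuples of partitions of $p^{k-1}$. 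All summands are non-negative.

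\textbf{Reduction to a combinatorial statement.} By the displayed identity, $\langle\mathcal{X}(\chi^\mu;\triv_{\fS_p})\down_{P_{p^k}},\triv\rangle=\tfrac1p\big((d_\mu^{(k-1)})^{p}+(p-1)d_\mu^{(k-1)}\big)\ge 1$ whenever $d_\mu^{(k-1)}\ge 1$; and for $\mu$ with $d_\mu^{(k-1)}\ge 2$ the coefficient $\tfrac1p((d_\mu^{(k-1)})^p-d_\mu^{(k-1)})$ of $\chi^\theta(1)$ is already positive, so \emph{every} $\theta\vdash p$ then contributes. Consequently, assuming by induction that $d_\mu^{(k-1)}>0$ exactly for $\mu\in\Delta(p^{k-1})$ (or, when $p^{k-1}=9$, for $\mu\vdash 9$ outside the six listed partitions), to prove $d_\lambda^{(k)}>0$ for a given $\lambda\in\Delta(p^k)$ it suffices to produce \emph{either} some $\mu\in\Delta(p^{k-1})$ for which $\chi^\lambda$ is a constituent of $\mathcal{X}(\chi^\mu;\triv_{\fS_p})\up^{\fS_{p^k}}$ (the plethysm $h_p\circ s_\mu$), \emph{or} a non-constant $p$-tuple all of whose entries lie in $\Delta(p^{k-1})$ and whose Littlewood--Richardson coefficient $c^\lambda_{\mu^{(1)},\dots,\mu^{(p)}}$ is nonzero. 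Dually, the easy direction forces these certificates to all fail for $\lambda\in\{(p^k-1,1),(2,1^{p^k-2})\}$.

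\textbf{The inductive step and the main obstacle.} What is left — and this is the hard part — is the purely combinatorial assertion that the characters $\mathcal{X}(\chi^\mu;\triv_{\fS_p})\up^{\fS_{p^k}}$ together with the Littlewood--Richardson products over non-constant tuples, with $\mu$ and the $\mu^{(i)}$ ranging over $\Delta(p^{k-1})$, exhaust $\{\chi^\lambda:\lambda\in\Delta(p^k)\}$. I would run a case analysis on the shape of $\lambda$, stratified by how close $\lambda$ is to a hook, a two-row, or a two-column partition: in the bulk of cases one peels off, using Theorem~\ref{thm:LR} and Lemmas~\ref{lem:BK}--\ref{lem:filling}, an explicit filling witnessing $c^\lambda_{\mu,\dots,\mu}>0$ (or $c^\lambda_{\mu^{(1)},\dots,\mu^{(p)}}>0$) for a suitable $\mu$ such as $(p^{k-1})$, $(1^{p^{k-1}})$ or a near-rectangular partition in $\Delta(p^{k-1})$; the delicate cases are exactly the near-hook $\lambda$ adjacent to the excluded $(p^k-1,1)$ and $(2,1^{p^k-2})$, where Lemma~\ref{lem:filling-type-(m-1,1)} is needed to show the required filling types are available. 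I expect two points to cost the most effort: (a) uniformity in $k$ — for small $k$ (notably $k=2$, and $p=3$) the pool $\Delta(p^{k-1})$ is small and may consist only of $\mu$ with $d_\mu^{(k-1)}=1$, so one cannot use the slack from $d_\mu^{(k-1)}\ge 2$ and must instead exploit several $\theta\in\Delta(p)$ and the non-constant tuples; and (b) the genuine exception $p^k=9$, which I would settle directly — computing the constituents of $(\triv_{P_9})\up^{\fS_9}$ from the formula above together with explicit Littlewood--Richardson computations, confirming that $\innprod{\lambda}{P_9}=0$ for precisely the six listed partitions — and then checking that this slightly depleted pool still suffices to drive the $k\ge 3$ induction, the extra room available at level $\ge 2$ rendering the six missing partitions harmless.
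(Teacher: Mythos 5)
Your framework is essentially the paper's: induct on $k$, restrict $\chi^\lambda$ to $\fS_{p^{k-1}}\wr\fS_p$ and then to $P_{p^k}=P_{p^{k-1}}^{\times p}\rtimes P_p$, and observe that any irreducible constituent of $\chi^\lambda\down_{\fS_{p^{k-1}}^{\times p}}$ of the form $\chi^{\mu_1}\times\cdots\times\chi^{\mu_p}$ with the $\mu_i$ not all equal and each $\innprod{\mu_i}{P_{p^{k-1}}}>0$ forces $\innprod{\lambda}{P_{p^k}}>0$: this is the paper's Proposition~\ref{prop:if-property-star}, and your exact multiplicity formula is a sharper version of the same Mackey/Clifford computation (though as written the second sum should run over $P_p$-orbits of non-constant tuples rather than $\fS_p$-orbits; this only affects a positive constant and is harmless for your purposes). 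Your treatment of the ``only if'' direction via transitivity of $P_{p^k}$ on $p^k$ points is correct and in fact cleaner than the paper's, which invokes results of \cite{GTT} and \cite{G2} on the decomposition of $\chi^{(p^k-1,1)}\down_{\fS_{p^{k-1}}\wr\fS_p}$.

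However, there is a genuine gap exactly where the paper's main technical content lies. Your reduction leaves open the combinatorial assertion that every $\lambda\in\Delta(p^k)$ other than $(p^k)$ and $(1^{p^k})$ admits a non-constant $p$-tuple $(\mu^{(1)},\dots,\mu^{(p)})$ with all $\mu^{(i)}\in\Delta(p^{k-1})$ and $c^\lambda_{\mu^{(1)},\dots,\mu^{(p)}}\ne 0$; this is precisely Proposition~\ref{prop:eugenew}, whose proof occupies the whole of Section~\ref{sec:have-prop-star}. You correctly identify this as ``the hard part'' but only gesture at a case analysis ``stratified by how close $\lambda$ is to a hook'' without carrying it out. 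The paper's route is an induction on $q$ from $2$ to $p$ through the sets $\mathcal{D}(q,p^{k-1})$, peeling off one tensor factor at a time with the operator $\Omega_q$ and controlling the exceptional skew shapes via Lemmas~\ref{lem:BK}--\ref{lem:filling-type-(m-1,1)}; this requires several pages of delicate case distinctions (rectangles, near-hooks, and the explicit tables in Lemma~\ref{lem:q=2-not-omega} and Proposition~\ref{prop:q=2}) and is not routine. Until that statement is supplied, your inductive step is not proved. Your auxiliary route via the plethysm $h_p\circ s_\mu$ for constant tuples is a legitimate extra tool not used in the paper, but you give no criterion for when $\chi^\lambda$ occurs in it, so it does not close the gap either.
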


Our proof is by induction on $k\in\mathbb{N}$. We start by stating and proving the base case $k=1$.

\begin{lem}\label{lem:n<p}
Let $n\in\mathbb{N}$ and suppose $n\le p$. Let $\lambda\vdash n$. Then $\innprod{\lambda}{P_n} = 0$ if and only if $n=p$ and $\lambda\in\{(p-1,1),(2,1^{p-2})\}$.
\end{lem}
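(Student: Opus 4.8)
The plan is to split into the cases $n<p$ and $n=p$. First suppose $n<p$. Then $p\nmid n!$, so $P_n$ is the trivial subgroup of $\fS_n$. Hence $(\triv_{P_n})\up^{\fS_n}$ is the regular character of $\fS_n$, which contains every irreducible $\chi^\lambda$ with positive multiplicity $\chi^\lambda(1)$. Equivalently, $\langle \chi^\lambda\down_{P_n},\triv_{P_n}\rangle = \langle\chi^\lambda,\triv_{\fS_n}\up^{\fS_n}\rangle$ is just $\chi^\lambda(1)>0$ for every $\lambda\vdash n$. So no $\chi^\lambda$ vanishes in this range, as claimed.

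Now suppose $n=p$, so $P_p=P$ is cyclic of order $p$, generated by a $p$-cycle $\sigma$. By Frobenius reciprocity, $\langle\chi^\lambda\down_P,\triv_P\rangle = \frac1p\big(\chi^\lambda(1) + (p-1)\chi^\lambda(\sigma)\big)$, since $\sigma$ and its nontrivial powers are all conjugate in $\fS_p$ (they are all $p$-cycles) and there are $p-1$ of them in $P$. Thus the multiplicity is $0$ precisely when $\chi^\lambda(\sigma) = -\chi^\lambda(1)/(p-1)$, and in particular $\chi^\lambda(\sigma)\neq 0$, so $p$ must divide $\chi^\lambda(1)$ — wait, more usefully: the multiplicity being a nonnegative integer, it is $0$ iff $\chi^\lambda(1)+(p-1)\chi^\lambda(\sigma)=0$. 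The key input is the Murnaghan\textendash Nakayama rule: since $\sigma$ is a single $p$-cycle and $|\lambda|=p$, $\chi^\lambda(\sigma)$ is nonzero only when $[\lambda]$ is a hook, say $\lambda=(p-r,1^r)$ for some $0\le r\le p-1$, in which case $\chi^\lambda(\sigma)=(-1)^r$; for all non-hook $\lambda$ we have $\chi^\lambda(\sigma)=0$ and the multiplicity is $\chi^\lambda(1)/p>0$. So it remains to examine the $p$ hook partitions. For $\lambda=(p-r,1^r)$ we have $\chi^\lambda(1)=\binom{p-1}{r}$, and the multiplicity is $\tfrac1p\big(\binom{p-1}{r}+(p-1)(-1)^r\big)$.

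The final step is the arithmetic check on hooks. Using $\binom{p-1}{r}\equiv(-1)^r\pmod p$ (a standard consequence of $\binom{p-1}{r}=\prod_{j=1}^{r}\frac{p-j}{j}\equiv\prod_{j=1}^r\frac{-j}{j}=(-1)^r$), the numerator $\binom{p-1}{r}+(p-1)(-1)^r\equiv (-1)^r-(-1)^r=0\pmod p$, so the multiplicity is always a nonnegative integer (as it must be), and it equals $0$ iff $\binom{p-1}{r}=(p-1)$ with $r$ odd, or $\binom{p-1}{r}=-(p-1)$ — but the latter is impossible as $\binom{p-1}{r}>0$. Now $\binom{p-1}{r}=p-1$ forces $r=1$ or $r=p-2$ (for $p\ge 5$; one checks $p=3$ separately, where $r=1=p-2$ is the only nontrivial hook other than the trivial and sign characters, and indeed $\binom{2}{1}=2=p-1$). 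For $r=1$, i.e. $\lambda=(p-1,1)$, $r$ is odd and the multiplicity is $\tfrac1p(p-1-(p-1))=0$; for $r=p-2$, i.e. $\lambda=(2,1^{p-2})$, since $p$ is odd $r=p-2$ is odd and again the multiplicity is $0$. For every other hook ($r=0$ giving the trivial character with multiplicity $1$, $r=p-1$ giving the sign character, and $2\le r\le p-3$) the numerator is strictly positive, so the multiplicity is positive. This exhausts all $\lambda\vdash p$ and establishes the claim. The main obstacle — such as it is — is simply organising the hook case cleanly; the Murnaghan\textendash Nakayama reduction to hooks and the congruence $\binom{p-1}{r}\equiv(-1)^r$ do all the real work.
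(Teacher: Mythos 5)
Your proof is correct and follows essentially the same route as the paper: the $n<p$ case is trivial since $P_n=1$, and for $n=p$ one computes $\frac1p\bigl(\chi^\lambda(1)+(p-1)\chi^\lambda(\sigma)\bigr)$, applies the Murnaghan--Nakayama rule to reduce to hooks, and identifies the hooks of odd leg length and degree $p-1$ as exactly $(p-1,1)$ and $(2,1^{p-2})$. Your extra details (the formula $\chi^{(p-r,1^r)}(1)=\binom{p-1}{r}$ and the congruence $\binom{p-1}{r}\equiv(-1)^r\pmod p$) merely flesh out the final step that the paper asserts without elaboration.
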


\begin{proof}
If $n<p$ then $P_n=1$ and the statement obviously holds. If $n=p$ then $P_p$ is cyclic of order $p$ and so
$$ \innprod{\lambda}{P_n} = \tfrac{1}{p}\big( (p-1)\cdot\chi^\lambda(\sigma) + \chi^\lambda(1) \big),$$
where $\sigma$ is a $p$-cycle in $\fS_p$. By the Murnaghan\textendash Nakayama rule (see \cite[2.4.7]{JK}), $\chi^\lambda(\sigma) = (-1)^l$ if $\lambda$ is a hook and $l$ is the leg length of $\lambda$, and $\chi^\lambda(\sigma)=0$ otherwise. Thus $\innprod{\lambda}{P_n} = 0$ if and only if $\lambda$ is a hook of odd leg length and $\chi^\lambda(1)=p-1$. This holds if and only if $\lambda\in\{(p-1,1),(2,1^{p-2})\}$.
\end{proof}

We record another easy and useful fact.
\begin{lem}\label{lem:conj}
Let $n\in\mathbb{N}$ and $\lambda\vdash n$. Then $\chi^\lambda\down_{P_n} = \chi^{\lambda'}\down_{P_n}$.
\end{lem}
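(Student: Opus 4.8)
The plan is to reduce the statement to the classical fact that tensoring an irreducible character of $\fS_n$ with the sign character $\operatorname{sgn}$ interchanges $\chi^\lambda$ and $\chi^{\lambda'}$, and then to observe that $\operatorname{sgn}$ restricts trivially to $P_n$ because, $p$ being odd, $P_n$ consists of even permutations.

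Concretely, I would first invoke the standard identity $\chi^{\lambda'} = \chi^\lambda\cdot\operatorname{sgn}$ in $\Irr(\fS_n)$ (see \cite[2.1.8]{JK}). Restricting both sides to $P_n$ gives
$$\chi^{\lambda'}\down_{P_n} = \big(\chi^\lambda\down_{P_n}\big)\cdot\big(\operatorname{sgn}\down_{P_n}\big),$$
so it remains only to check that $\operatorname{sgn}\down_{P_n} = \triv_{P_n}$, equivalently that $P_n\le\fA_n$. Since $p$ is odd, every element of the $p$-group $P_n$ has order a power of $p$, and hence, written as a product of disjoint cycles, involves only cycles of odd length; each such cycle is an even permutation, so the element lies in $\fA_n$. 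Therefore $\operatorname{sgn}$ is constantly equal to $1$ on $P_n$, and the desired equality $\chi^\lambda\down_{P_n} = \chi^{\lambda'}\down_{P_n}$ follows.

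This argument presents no genuine obstacle; the only point where the hypotheses are used is the final parity count, which relies on $p$ being odd. (For $p=2$ the statement fails, since the sign character restricts non-trivially to a Sylow $2$-subgroup, as recalled in the introduction.)
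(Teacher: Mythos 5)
Your proposal is correct and follows exactly the paper's argument: apply the identity $\chi^{\lambda'}=\chi^\lambda\cdot\operatorname{sgn}$ and note that $P_n\le\fA_n$ because $p$ is odd. The only difference is that you spell out the parity count showing $P_n$ consists of even permutations, which the paper leaves implicit.
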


\begin{proof}
We know that $\chi^{\lambda'}=\chi^\lambda\cdot\operatorname{sign}(n)$ where $\operatorname{sign}(n)$ is the sign character of $\fS_n$. Since $p$ is odd, $P_n$ is contained in the alternating subgroup of $\fS_n$, and the assertion follows.
\end{proof}

The following proposition is one of the key steps in our proof of Theorem \ref{thm:n-prime-power}.

\begin{prop}\label{prop:if-property-star}
Let $\{\mu_1,\ldots, \mu_p\}$ be a subset of $\mathcal{P}(p^k)$ of size at least $2$, such that for all $1\le i\le p$, we have $\innprod{\mu_i}{P_{p^k}}\neq 0$. Let $\lambda\in\mathcal{P}(p^{k+1})$ be such that $\chi^{\mu_1}\times\cdots\times\chi^{\mu_p}$ is an irreducible constituent of $(\chi^\lambda)\down_{\fS_{p^k}^{\times p}}$. Then $\innprod{\lambda}{P_{p^{k+1}}} \neq 0$.
\end{prop}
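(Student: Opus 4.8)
The plan is to use Frobenius reciprocity together with the structure of $P_{p^{k+1}}$ as the wreath product $P_{p^k}\wr P_p$, so that the inner product with $\triv$ is computed inside the wreath product. Concretely, since $P_{p^{k+1}}\le \fS_{p^k}\wr \fS_p \le \fS_{p^{k+1}}$, by transitivity of restriction it suffices to understand $\chi^\lambda\down_{\fS_{p^k}\wr\fS_p}$ and then restrict further to $P_{p^{k+1}}=P_{p^k}\wr P_p$. First I would pick a single $\mu\in\{\mu_1,\dots,\mu_p\}$ with $\innprod{\mu}{P_{p^k}}\ne 0$; the hypothesis $|\{\mu_1,\dots,\mu_p\}|\ge 2$ guarantees we may choose our constituents not all equal, which will matter below. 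The key observation is that $\chi^{\mu_1}\times\cdots\times\chi^{\mu_p}$ being a constituent of $\chi^\lambda\down_{\fS_{p^k}^{\times p}}$ forces, by Clifford theory applied to the normal subgroup $\fS_{p^k}^{\times p}\trianglelefteq \fS_{p^k}\wr\fS_p$, that $\chi^\lambda\down_{\fS_{p^k}\wr\fS_p}$ has a constituent $\Theta$ lying over the $\fS_p$-orbit of $\chi^{\mu_1}\times\cdots\times\chi^{\mu_p}$.

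Next I would analyze $\Theta\down_{P_{p^k}\wr P_p}$ and show it contains $\triv$. Here is where the case split enters. If the $\mu_i$ are not all equal, the stabilizer in $\fS_p$ of $\chi^{\mu_1}\times\cdots\times\chi^{\mu_p}$ is a proper (Young) subgroup $\fS_p\cap(\fS_{a_1}\times\cdots)$, and since $p$ is prime this stabilizer has order coprime to $p$; then $\Theta$ is induced from the stabilizer, and using the description of $\Irr(\fS_{p^k}\wr\fS_p)$ from \cite[Chapter 4]{JK} together with the fact that each $\innprod{\mu_i}{P_{p^k}}\ne 0$, one checks that $\langle \Theta\down_{P_{p^k}\wr P_p},\triv\rangle\ne 0$ by computing on $P_{p^k}^{\times p}$ (where $\prod\chi^{\mu_i}$ restricts with $\triv$ as a constituent by hypothesis) and noting that $P_p$ acts on this trivial isotypic space, necessarily fixing a vector since a nontrivial $P_p=C_p$ permutes the relevant orbit freely only when all $\mu_i$ coincide. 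If all the $\mu_i$ are equal — which, given $|\{\mu_i\}|\ge 2$, cannot happen for our chosen subset, so in fact this case is excluded by hypothesis — nothing further is needed; this is precisely the role of the size-$\ge 2$ assumption, and I would flag explicitly that it is what rules out the obstruction of $C_p$ acting freely on a trivial isotypic component of dimension $1$.

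Having produced $\triv_{P_{p^k}\wr P_p}$ as a constituent of $\chi^\lambda\down_{P_{p^{k+1}}}$, the proposition follows immediately, since $P_{p^{k+1}}\cong P_{p^k}\wr P_p$ and $\innprod{\lambda}{P_{p^{k+1}}}=\langle \chi^\lambda\down_{P_{p^{k+1}}},\triv_{P_{p^{k+1}}}\rangle$. I expect the main obstacle to be the bookkeeping in the Clifford-theoretic step: one must track exactly which irreducible character $\Theta$ of $\fS_{p^k}\wr\fS_p$ (or of the relevant inertia subgroup) sits under $\chi^\lambda$ and over a permuted product of the $\mu_i$, and then verify that restricting $\Theta$ to $P_{p^k}\wr P_p$ genuinely yields a trivial summand rather than merely a $P_{p^k}^{\times p}$-trivial summand on which $P_p$ acts without fixed points. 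The hypothesis that the multiset $\{\mu_1,\dots,\mu_p\}$ is not a single repeated partition is the precise hinge that makes the $P_p$-action have a fixed vector, and I would make sure the write-up isolates this point cleanly, perhaps via a short lemma on induced characters of $G\wr C_p$ restricted to $H\wr C_p$.
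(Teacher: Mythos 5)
Your plan is essentially the paper's proof: both pass through the intermediate subgroup $\fS_{p^k}\wr\fS_p\supseteq P_{p^k}\wr P_p=P_{p^{k+1}}$, pick a constituent $\Theta$ of $\chi^\lambda\down_{\fS_{p^k}\wr\fS_p}$ lying over the $\fS_p$-orbit of $\chi^{\mu_1}\times\cdots\times\chi^{\mu_p}$, note that the stabilizer of this character in $\fS_p$ is a Young subgroup of order coprime to $p$ (exactly where $|\{\mu_1,\dots,\mu_p\}|\ge 2$ enters), and extract a $P_p$-fixed vector in the $\triv_{P_{p^k}^{\times p}}$-isotypic component --- the paper formalizes this last step via Mackey's formula for $\Theta=\phi\up_{B\rtimes I}^{\fS_{p^k}\wr\fS_p}$ restricted to $P_{p^k}^{\times p}\rtimes P_p$, using $I^g\cap P_p=1$. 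The only sentence to repair in a write-up is ``$C_p$ permutes the relevant orbit freely \emph{only when} all $\mu_i$ coincide,'' whose implication is reversed: the orbit is permuted freely precisely \emph{because} the $\mu_i$ are not all equal, and that freeness (summing a nonzero $\triv$-isotypic vector over the orbit) is what produces the fixed vector.
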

\begin{proof}
Let $G=\fS_{p^{k+1}}$, let $B=\fS_{p^k}^{\times p}\leq \fS_{p^{k+1}}$ and set $\psi=\chi^{\mu_1}\times\cdots\times\chi^{\mu_p}\in\mathrm{Irr}(B)$. 
Let $P=P_{p^{k+1}}$ be such that $P=C\rtimes D$ where $P_{p^k}^{\times p}\cong C\leq B$ and where $P_p\cong D\leq G$.
Let $R$ be a subgroup of $G$ isomorphic to $\fS_p$, naturally acting on $B$ by permuting (as blocks for its action) the $p$ direct factors of $B$. Hence $H:=B\rtimes R$ is such that 
$B\leq H\leq G$ and $H\cong \fS_{p^k}\wr\fS_p$. We choose $R$ containing $D$ in order to have $P\leq H$.
Since $\chi^\lambda\in\mathrm{Irr}(G\ |\ \psi)$, there exists $\chi\in\mathrm{Irr}(H\ |\ \psi)$ such that $\langle \chi^{\lambda}\down_{H}, \chi \rangle\neq 0$.

Let $A=\{\mu_1,\dotsc,\mu_p\}$ and let $a=|A|$. Then (without loss of generality, up to a reordering of the partitions in $A$) there exist partitions $\gamma_1, \ldots, \gamma_a\in\mathcal{P}(p^k)$ and a partition $(i_1,i_2,\ldots, i_a)\vdash p$ such that 
$\gamma_1 = \mu_1 = \mu_2 = \dotsc = \mu_{i_1}$, $\gamma_2 = \mu_{i_1+1} = \dotsc = \mu_{i_1+i_2}$, and so on.
Let $I$ be the subgroup of $R$ such that $B\rtimes I$ is the stabilizer of $\psi$ in $H$. Clearly $I\cong\fS_{i_1}\times\fS_{i_2}\times\cdots\times\fS_{i_{a}}\leq \fS_p$.

From the description of irreducible characters of wreath products given for example in \cite[Chapter~4.3]{JK}, we have that for all $j\in\{1,\ldots, a\}$ there exists $\nu_j\vdash i_j$ such that $\chi=\phi\up_{B\rtimes I}^H$, where 
$$ \phi = \mathcal{X}(\gamma_1;\nu_1)\times\cdots\times\mathcal{X}(\gamma_{a};\nu_{a})\in\mathrm{Irr}(B\rtimes I).$$
(Here we denoted by $\mathcal{X}(\gamma;\nu)$ the character $\mathcal{X}(\chi^\gamma;\chi^\nu)$. This lighter notation will be used again later on in the article.)
Recalling that $P= C\rtimes D$, Mackey's restriction formula gives
\begin{align*}
\chi\down_{P} &= \phi \up_{B\rtimes I}^{H} \down_{P}\\
&= \sum_{g\in P\setminus H/B\rtimes I} \Big( \phi^g\down_{(B\rtimes I^g)\cap (C\rtimes D)}^{(B\rtimes I^g)} \Big) \up ^{P}\\
&= \sum_{g\in P\setminus H/B\rtimes I} \Big( \phi^g\down_{C}^{(B\rtimes I^g)} \Big) \up ^{P}.
\end{align*}
The last equality holds because for all $g\in H$ we have that $I^g\cap D = 1$, since $D\cong P_p$ but $I$ contains no elements of order $p$ as $a\ge 2$.
Considering the double coset representative $g=1\in H$, we have that $\phi\down_C\up ^{P}$ is a summand of $\chi\down_{P}$. Moreover,
\begin{align*}
\phi\down_C
= N\cdot (\chi^{\gamma_1}\down_{P_{p^k}})^{\times i_1}\times\cdots\times (\chi^{\gamma_{a}}\down_{P_{p^k}})^{\times i_{a}}=N\big(\chi^{\mu_1}\down_{P_{p^k}}\times\cdots\times\chi^{\mu_p}\down_{P_{p^k}}\big),
\end{align*}
where $N=\prod_{j=1}^a \chi^{\nu_j}(1)\in\mathbb{N}$.
Hence, $\triv_C$ is an irreducible constituent of $\phi\down_C$ and therefore
$\triv_C\up^{P}$ is a summand of $\phi\down_C\up ^{P} $ and of $\chi\down_{P}$. This along with Frobenius reciprocity shows that 
$$\innprod{\lambda}{P} \ge \innprod{}{P} \ge \langle \triv_C\up^{P}, \triv_{P}\rangle = \langle \triv_C, \triv_P\down_C \rangle = 1.$$
\end{proof}

In light of Proposition \ref{prop:if-property-star}, we will now focus on the study of the restriction of irreducible characters of $\fS_{p^{k+1}}$ to the Young subgroup $\fS_{p^k}^{\times p}$.
For this reason, we introduce the following notation. 

\begin{defn}\label{def:star}
Let $k\in\mathbb{N}$ and let $q\in\{2,3,\dotsc,p\}$. 
We let $\mathcal{D}(q,p^k)$ be the subset of $\mathcal{P}(qp^k)$ consisting of all partitions $\lambda\vdash qp^k$ such that 
the restriction $(\chi^\lambda)\down_{\fS_{p^k}^{\times q}}$ has an irreducible constituent of the form $\chi^{\mu_1}\times\cdots\times\chi^{\mu_q}$ satisfying $\mu_i\in\Delta(p^k)$ for all $i$, and the $\mu_i$ are not all equal.
\end{defn}

Our next goal is to show that $\mathcal{D}(p,p^k)$ is a very large subset of $\Delta(p^{k+1})$. First we observe the following easy property. 

\begin{lem}\label{lem:star-conj}
Let $\lambda\in\mathcal{P}(qp^k)$. Then $\lambda\in\mathcal{D}(q,p^k)$ if and only if $\lambda'\in\mathcal{D}(q,p^k)$
\end{lem}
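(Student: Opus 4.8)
The plan is to reduce the statement to Lemma~\ref{lem:conj} by conjugating partitions throughout. Suppose $\lambda\in\mathcal{D}(q,p^k)$. By Definition~\ref{def:star}, the restriction $(\chi^\lambda)\down_{\fS_{p^k}^{\times q}}$ has an irreducible constituent $\chi^{\mu_1}\times\cdots\times\chi^{\mu_q}$ with each $\mu_i\in\Delta(p^k)$ and the $\mu_i$ not all equal. First I would tensor with the sign character $\operatorname{sign}(qp^k)$ of $\fS_{qp^k}$: this sends $\chi^\lambda$ to $\chi^{\lambda'}$, and its restriction to the Young subgroup $\fS_{p^k}^{\times q}$ is $\operatorname{sign}(p^k)^{\times q}$, so it sends the constituent $\chi^{\mu_1}\times\cdots\times\chi^{\mu_q}$ to $\chi^{\mu_1'}\times\cdots\times\chi^{\mu_q'}$. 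Hence $\chi^{\mu_1'}\times\cdots\times\chi^{\mu_q'}$ is an irreducible constituent of $(\chi^{\lambda'})\down_{\fS_{p^k}^{\times q}}$.

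It then remains to check that the partitions $\mu_i'$ still satisfy the two conditions in Definition~\ref{def:star}. For the first: $\Delta(p^k)=\mathcal{P}(p^k)\smallsetminus\{(p^k-1,1),(2,1^{p^k-2})\}$, and since $(p^k-1,1)' = (2,1^{p^k-2})$ and vice versa, the set $\Delta(p^k)$ is closed under conjugation; thus $\mu_i\in\Delta(p^k)$ implies $\mu_i'\in\Delta(p^k)$ for all $i$. For the second: conjugation is a bijection on partitions, so $\mu_1,\dotsc,\mu_q$ are not all equal if and only if $\mu_1',\dotsc,\mu_q'$ are not all equal. This shows $\lambda'\in\mathcal{D}(q,p^k)$, giving one implication; the reverse implication follows by symmetry (apply the same argument to $\lambda'$, using $(\lambda')'=\lambda$).

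There is essentially no obstacle here: the only mild subtlety is to justify that restricting the sign character of $\fS_{qp^k}$ to the Young subgroup $\fS_{p^k}^{\times q}$ gives the external product of the sign characters of the factors, which is immediate from the definition of the sign character on each block of coordinates, and that tensoring commutes with restriction. Everything else is the combinatorial bookkeeping just described.
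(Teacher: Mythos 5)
Your argument is correct and is essentially the paper's own proof: both use $\chi^{\lambda'}=\chi^\lambda\cdot\operatorname{sign}(\fS_{qp^k})$, the fact that the sign character restricts to $\fS_{p^k}^{\times q}$ as the external product of sign characters, and the observation that $\Delta(p^k)$ is closed under conjugation. No issues.
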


\begin{proof}
We know that $\chi^{\lambda'}=\chi^\lambda\cdot\mathrm{sign}(\fS_{qp^k})$. 
Moreover, we observe that the set $\Delta(p^k)$ is closed under conjugation of partitions. Since $(\mathrm{sign}(\fS_{qp^k}))\down_{\fS_{p^k}^{\times q}}=\mathrm{sign}(\fS_{p^k})\times\cdots\times\mathrm{sign}(\fS_{p^k})$, the statement follows.
\end{proof}

\begin{prop}\label{prop:eugenew}
Let $k\in\mathbb{N}$ be such that $p^k\neq 3$. Then 
$$\mathcal{D}(p,p^k)=\mathcal{P}(p^{k+1})\setminus \{(p^{k+1}),(p^{k+1}-1,1),(2,1^{p^{k+1}-2}),(1^{p^{k+1}})\}.$$
\end{prop}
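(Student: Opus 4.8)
The plan is to prove the two inclusions separately, with the bulk of the work lying in showing $\mathcal{P}(p^{k+1})\setminus\{(p^{k+1}),(p^{k+1}-1,1),(2,1^{p^{k+1}-2}),(1^{p^{k+1}})\}\subseteq\mathcal{D}(p,p^k)$. For the easy inclusion, if $\lambda$ is one of the four excluded partitions, then $(\chi^\lambda)\down_{\fS_{p^k}^{\times p}}$ is easily computed: for $\lambda=(p^{k+1})$ or $(1^{p^{k+1}})$ the restriction is a single character $\triv^{\times p}$ or $\mathrm{sign}^{\times p}$, whose factors correspond to $(p^k)$ or $(1^{p^k})$, both lying outside $\Delta(p^k)$; for $\lambda=(p^{k+1}-1,1)$ (and dually $(2,1^{p^{k+1}-2})$ by Lemma~\ref{lem:star-conj}) the Littlewood--Richardson rule shows every constituent $\chi^{\mu_1}\times\cdots\times\chi^{\mu_p}$ of the restriction has all $\mu_i=(p^k)$ except possibly one equal to $(p^k-1,1)$, and $(p^k)\notin\Delta(p^k)$. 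So none of these four partitions lies in $\mathcal{D}(p,p^k)$.

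For the hard inclusion, fix $\lambda\vdash p^{k+1}$ not among the four exceptions; I want to produce a constituent $\chi^{\mu_1}\times\cdots\times\chi^{\mu_p}$ of $(\chi^\lambda)\down_{\fS_{p^k}^{\times p}}$ with all $\mu_i\in\Delta(p^k)$ and the $\mu_i$ not all equal. Equivalently, using iterated branching $\fS_{p^{k+1}}\to\fS_{p^k}\times\fS_{p\cdot p^k-p^k}\to\cdots$, I need to peel off $p$ successive pieces of size $p^k$ via the Littlewood--Richardson rule, i.e.\ find a chain $[\lambda]\supsetneq[\lambda^{(1)}]\supsetneq\cdots\supsetneq[\lambda^{(p-1)}]$ with each skew shape $[\lambda^{(i-1)}\setminus\lambda^{(i)}]$ (and the final $\lambda^{(p-1)}$ itself) admitting a Littlewood--Richardson filling whose type $\mu_i$ lies in $\Delta(p^k)$, arranging moreover that the $\mu_i$ are not all equal. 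The cleanest strategy is: first observe that $\lambda$ always has at least two removable pieces of size $p^k$ whose types can be made to differ — for instance one can often take $\mu_p=(p^k)$ by stripping the first $p^k$ boxes of the first row (valid when $\lambda_1\ge p^k$), or $\mu_p=(1^{p^k})$ by stripping the first column, and then handle the remaining $[\lambda\setminus\text{(removed row or column)}]\vdash p^{k+1}-p^k$ by a separate lemma guaranteeing it decomposes into $p-1$ factors all in $\Delta(p^k)$. The small exceptional behaviour at $p^k=3$ is exactly why that case is excluded: there $\Delta(3)=\mathcal{P}(3)\setminus\{(2,1)\}=\{(3),(1^3)\}$ is too small to realise the needed fillings.

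Concretely I expect the argument to split into cases on the shape of $\lambda$: (a) $\lambda_1\ge p^k$ and $\lambda_1'\ge p^k$ — generic case, where one has enough room to strip a long row and a long column, leaving a shape that inductively (or by direct LR bookkeeping) breaks into admissible $\Delta(p^k)$-pieces, with freedom to make two of the types unequal; (b) $\lambda_1\ge p^k$ but $\lambda$ has few rows, forcing most pieces to be near-rectangular — here one uses that rectangles $(a,a,\ldots,a)\vdash p^k$ and near-rectangles lie in $\Delta(p^k)$ as long as they are not $(p^k-1,1)$, $(2,1^{p^k-2})$, which for $p^k\ne 3$ still leaves ample choices; (c) the transpose situations, handled by Lemma~\ref{lem:star-conj}; and finally a handful of genuinely small $\lambda$ (e.g.\ when $p=3$, $k$ small but $p^k\ne 3$, so $p^k=9,27,\ldots$) checked by the explicit description of $\Delta(p^k)$ from Theorem~\ref{thm:n-prime-power}. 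The main obstacle is case (b)/the boundary cases: one must verify that after forcibly removing $p^k$ boxes the residual partition of $p^{k+1}-p^k$ boxes can be carved into exactly $p-1$ horizontal/skew strips each of type in $\Delta(p^k)$ — this is a delicate combinatorial claim about skew shapes, and it is where Lemmas~\ref{lem:BK}, \ref{lem:filling} and \ref{lem:filling-type-(m-1,1)} do the heavy lifting, the last one precisely ruling out that a forced type is $(p^k-1,1)$ unless the skew shape is (anti)-rectangular, in which case an alternative filling of type $(p^k)$ or $(p^k-2,2)$ rescues us.
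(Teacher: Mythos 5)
Your high-level plan --- peel off $p$ successive skew pieces of size $p^k$ and control their Littlewood--Richardson types, invoking Lemmas~\ref{lem:BK} and~\ref{lem:filling-type-(m-1,1)} when a type $(p^k-1,1)$ or $(2,1^{p^k-2})$ is forced --- is indeed the shape of the paper's argument, which proves the stronger statement $\mathcal{D}(q,p^k)=A(q,p^k)$ by induction on $q$. But there is a genuine gap at the core: the mechanism you propose for choosing which $p^k$ boxes to remove (strip the first $p^k$ boxes of the top row, or strip the first column) is only available when $\lambda_1\ge p^k$ or $\lambda_1'\ge p^k$, and your cases (a)--(c) never cover partitions with $\lambda_1<p^k$ \emph{and} $\lambda_1'<p^k$. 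For $k\ge 2$ this is not ``a handful of genuinely small $\lambda$'': already for $p=3$, $k=2$ the partition $(6,6,6,6,3)\vdash 27$ has no row or column of length $9$, and such balanced partitions form a large portion of $\mathcal{P}(p^{k+1})$ whenever $p^{2k}>p^{k+1}$, i.e.\ $k\ge 2$. The paper's device for exactly this problem is the operator $\Omega_q$ of Definitions~\ref{def:order} and~\ref{def:omega}, which removes from each part of $\lambda$ roughly a $1/q$ proportion of its boxes, so that the skew shape $[\lambda\setminus\Omega_q(\lambda)]$ is spread over all rows and admits fillings of many types (Lemmas~\ref{lem:skew} and~\ref{lem:filling-type-(m-1,1)}); nothing in your sketch plays this role. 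Relatedly, the ``separate lemma'' you defer to --- that the residual partition of $(p-1)p^k$ boxes splits into $p-1$ admissible factors --- is precisely the inductive statement $\mathcal{D}(q-1,p^k)=A(q-1,p^k)$, whose base case $q=2$ is the hardest part of the paper (Proposition~\ref{prop:q=2} together with Lemma~\ref{lem:q=2-not-omega}); note that for $q=2$ the exceptional set is strictly larger than the four extremes (it also contains the rectangles $(p^k,p^k)$ and $(2^{p^k})$) and that $p^k=5$ is genuinely exceptional there --- subtleties your outline does not anticipate but which any one-factor-at-a-time peeling must confront.

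A smaller but real error: in the easy inclusion you assert that $(p^k)$ and $(1^{p^k})$ lie outside $\Delta(p^k)$. They do not; $\Delta(p^k)$ excludes only $(p^k-1,1)$ and $(2,1^{p^k-2})$. The correct reason $(p^{k+1})$ and $(1^{p^{k+1}})$ fail to lie in $\mathcal{D}(p,p^k)$ is that the unique constituent of the restriction has all $p$ factors equal, violating the ``not all equal'' clause of Definition~\ref{def:star}; for $(p^{k+1}-1,1)$ every constituent either has all factors equal to $(p^k)$ or contains a factor $(p^k-1,1)\notin\Delta(p^k)$.
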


\begin{proof}
The proof of this statement is postponed to Section \ref{sec:have-prop-star}.
\end{proof}

We are now ready to prove Theorem \ref{thm:n-prime-power}.

\medskip

\begin{proof}[Proof of Theorem~\ref{thm:n-prime-power}] We proceed by induction on $k\ge 1$ for $p\ge 5$ and on $k\ge 3$ for $p=3$. The base case for $p\ge 5$ follows from Lemma~\ref{lem:n<p}, while the assertion may be verified computationally for $k\le 3$ if $p=3$.
Now assume the statement holds for $k\in\mathbb{N}$. To ease the notation let $n=p^{k+1}$, $P=P_{n}$ and let $A$ be the set defined by $$A=\{\lambda\vdash n\ |\ \innprod{\lambda}{P} \neq 0 \}.$$
From Proposition \ref{prop:if-property-star}, used together with the inductive hypothesis, we deduce that $\mathcal{D}(p,p^k)\subseteq A$. Moreover, both $(n), (1^n)\in A$ since $\chi^{(n)}\down_{P} = \triv_{P} = \chi^{(1^{n})}\down_{P}$.
Hence we have that $\Delta(n)\subseteq A$, by Proposition \ref{prop:eugenew}.
To conclude we need to show that $(n-1,1)$ and $(2,1^{n-2})$ are not in $A$. 
By Lemma \ref{lem:conj} it suffices to show that $(n-1,1)\notin A$. 

Let $B=\fS_{p^k}^{\times p}\le \fS_{p^k}\wr\fS_p\le \fS_{p^k+1}$ and let $C\le B$ where $C\cong P_{p^k}^{\times p}$. From \cite[Lemma 3.2]{GTT} and the Littlewood\textendash Richardson rule we see that $\chi^{(n-1,1)}\down_B = (p-1)\triv_B+\Theta$, where 
$$\Theta=\sum_{i=0}^{p-1}(\chi^\mu\times\triv\times\cdots\times\triv)^{\sigma^i},\ \mu=(p^k-1,1)\ \text{and}\ \sigma\ \text{is a}\ p\text{-cycle in}\ (\fS_{p^k}\wr\fS_p)\smallsetminus B.$$
From \cite[Theorem 4.2 and Proposition 4.3]{G2} there exists $\nu\in\{(p-1,1), (2,1^{p-2})\}$ such that 
$$\chi^{(n-1,1)}\down_{\fS_{p^k}\wr\fS_p} = \mathcal{X}((p^k);\nu) + \Delta,$$ where $\Delta$ is a sum of irreducible characters of $\fS_{p^k}\wr\fS_p$ whose degree is divisible by $p$.
Since $\mathcal{X}((p^k);\nu)\down_B = (p-1)\triv_B$, we have that $\Delta\down_B=\Theta$. Using the inductive hypothesis, we see that $\triv_{P_{p^k}}$ is not a constituent of $\chi^\mu\down_{P_{p^k}}$. Hence $\langle\Theta\down_C,\triv_C\rangle=0$. Together these show that $\langle\Delta\down_C,\triv_C\rangle=\langle\Theta\down_C,\triv_C\rangle=0$ and we deduce that $\langle(\chi^{(n-1,1)})\down_P, \triv_P\rangle=\langle \mathcal{X}((p^k);\nu)\down_{P}, \triv_{P}\rangle$.

Finally by Lemma \ref{lem:n<p} we know that $\langle\chi^\nu\down_{P_p}, \triv_{P_p}\rangle=0$. Since $\triv_P=\mathcal{X}(\triv_{P_{p^k}}, \triv_{P_p})$ we deduce that 
$\langle \mathcal{X}((p^k);\nu)\down_{P}, \triv_{P}\rangle=0$, whence $\langle(\chi^{(n-1,1)})\down_P, \triv_P\rangle=0$ as required. Thus the statement of the theorem holds for $k+1$. This concludes the proof.
\end{proof}

%%%%%%%%%%%%%%%%%%%%%%%%%%%%%%%%%%%%%%%%%%%%%%%%%%%%%%%%%%%%%%%%%%%%%%%%%

%%%%%%%%%%%%%%%%%%%%%%%%%%%%%%%%%%%%%%%%%%%%%%%%%%%%%%%%%%%%%%%%%%%%%%%%%

\section{The proof of Proposition~\ref{prop:eugenew}}\label{sec:have-prop-star}

The goal of this section is to give a complete proof of Proposition \ref{prop:eugenew}. In order to do this we will show that a more general fact (see Proposition \ref{prop:have-property-star} below) holds. In order to state Proposition \ref{prop:have-property-star}, we first need to introduce the following notation. 

\begin{defn}
For $q\in\{3,4,\dotsc,p\}$, let
$$A(q,p^k):= \mathcal{P}(qp^k)\setminus \{(qp^k),(qp^k-1,1),(2,1^{qp^k-2}),(1^{qp^k})\}.$$
For $q=2$, let
$$A(2,p^k) := \mathcal{P}(2p^k)\setminus \{(2p^k),(2p^k-1,1),(p^k,p^k), (2^{p^k}),(2,1^{2p^k-2}),(1^{2p^k})\}.$$
%Finally, let $A_5(1,2) :=\mathcal{P}(10)\setminus\{(10),(9,1),(6,4),(5,5),(2^5),(2^4,1^2),(2,1^8),(1^{10}) \}$. We are going to say \mathcal{D}(5,5)=A(5,5). Proof:  by hand (or by GAP). So we do not need A(2, 5).
\end{defn}

\smallskip

\begin{prop}\label{prop:have-property-star}
Let $k\in\mathbb{N}$ be such that $p^k\notin \{3,5\}$. Then $\mathcal{D}(q,p^k)=A(q,p^k)$, for all $q\in\{2,3,\dotsc,p\}$. 
\end{prop}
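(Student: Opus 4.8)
The plan is to prove Proposition~\ref{prop:have-property-star} by a double induction: an outer induction on $k$ and, for each fixed $k$, an auxiliary induction on $q\in\{2,3,\dotsc,p\}$. First I would dispose of one inclusion: to show $\mathcal{D}(q,p^k)\subseteq A(q,p^k)$, I would observe that if $\lambda$ lies in one of the excluded sets then every irreducible constituent $\chi^{\mu_1}\times\cdots\times\chi^{\mu_q}$ of $(\chi^\lambda)\down_{\fS_{p^k}^{\times q}}$ forces the $\mu_i$ to be rather constrained --- for $\lambda=(qp^k)$ or $(1^{qp^k})$ all $\mu_i$ are equal to $(p^k)$ (resp.\ $(1^{p^k})$), so the ``not all equal'' condition fails; for $\lambda=(qp^k-1,1)$ each constituent has all but one $\mu_i$ equal to $(p^k)$ and the remaining one equal to $(p^k-1,1)\notin\Delta(p^k)$; and similarly for the transpose and for the two extra $q=2$ exceptions $(p^k,p^k),(2^{p^k})$, using the Littlewood--Richardson rule (Theorem~\ref{thm:LR}) to enumerate the possible $\mu_i$. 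Lemma~\ref{lem:star-conj} halves this casework.

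The substantial direction is $A(q,p^k)\subseteq\mathcal{D}(q,p^k)$: given $\lambda\vdash qp^k$ not in the excluded list, I must exhibit an LR filling realising $\lambda$ as a constituent of $\chi^{\mu_1}\times\cdots\times\chi^{\mu_q}$ with all $\mu_i\in\Delta(p^k)$ and not all equal. The base case of the outer induction ($k$ minimal, i.e. $p^k=5$ is excluded so $p^k=7$ for $p=7$, and $p^k=p$ in general with the $p^k\in\{3,5\}$ exclusions) would be handled by building explicit fillings, or checked computationally for the genuinely small cases. For the inductive step I would reduce $q$ to $q-1$: write $\lambda\vdash qp^k$ and try to find $\mu_q\in\Delta(p^k)$ and a partition $\kappa\vdash(q-1)p^k$ with $c^\lambda_{\kappa\mu_q}\neq 0$ and $\kappa\in\mathcal{D}(q-1,p^k)\cup\{\text{the all-equal case}\}$; concretely one slices $[\lambda]$ into a piece of size $p^k$ that forms (a rotation of) a partition in $\Delta(p^k)$ --- Lemma~\ref{lem:BK} and Lemma~\ref{lem:filling} tell us exactly when a skew slice has a unique LR filling and of what type --- leaving a remainder $\kappa$ to which the inductive hypothesis for $q-1$ applies. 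The passage $q=2\to$ larger $q$ needs care because $A(2,p^k)$ has the two extra exceptions; I would show these do not propagate, i.e. a $\lambda\vdash qp^k$ with $q\ge 3$ whose every slicing forces a remainder in $\{(p^k,p^k),(2^{p^k})\}$ simply does not occur outside the listed exceptions.

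The genuinely new input, and the step I expect to be the main obstacle, is the \emph{base of the $q$-induction}, namely proving $\mathcal{D}(2,p^k)=A(2,p^k)$: one must show that for every $\lambda\vdash 2p^k$ outside the six exceptions there is a way to cut $[\lambda]$ into two overlapping $p^k$-box pieces, each of which is (up to rotation) the Young diagram of a partition lying in $\Delta(p^k)$, with the two partitions distinct. This is a purely combinatorial problem about Littlewood--Richardson coefficients $c^\lambda_{\mu\nu}$ with $\mu,\nu\vdash p^k$, and the difficulty is that $\Delta(p^k)$ excludes precisely the two ``next-to-extreme'' hooks, so the naive slices (e.g.\ peeling off the first row or first column) can land on a forbidden partition and one must argue there is always an alternative cut --- case analysis on whether $\lambda_1$ and $\lambda_1'$ are close to $p^k$, on the number of parts, and on near-hook shapes, invoking Lemma~\ref{lem:filling-type-(m-1,1)} to control exactly when type $(p^k-1,1)$ is unavoidable. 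Once Proposition~\ref{prop:have-property-star} is established, Proposition~\ref{prop:eugenew} follows at once: for $p^k\notin\{3,5\}$ it is the case $q=p$, and the residual case $p^k=5$ (where $q=p=5$) must be checked separately, presumably by direct computation, to confirm the same conclusion $\mathcal{D}(5,5)=A(5,5)=\mathcal{P}(25)\setminus\{(25),(24,1),(2,1^{23}),(1^{25})\}$ as claimed in the statement of Proposition~\ref{prop:eugenew}.
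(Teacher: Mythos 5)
Your high-level architecture coincides with the paper's: induction on $q$, the easy inclusion $\mathcal{D}(q,p^k)\subseteq A(q,p^k)$ obtained by enumerating the constituents of $\chi^\lambda\down_{\fS_{p^k}^{\times q}}$ for the excluded $\lambda$, and the step $q-1\to q$ carried out by slicing a $p^k$-box piece of type in $\Delta(p^k)$ off $[\lambda]$ so that the remainder lies in $A(q-1,p^k)=\mathcal{D}(q-1,p^k)$. (Your outer induction on $k$ is vestigial: you never describe a $k\to k+1$ step, and indeed the paper fixes $k$ throughout.) The genuine gap is exactly where you predict it: the base case $\mathcal{D}(2,p^k)=A(2,p^k)$. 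You reduce it to ``there is always an alternative cut'' and propose a case analysis on $\lambda_1$, $\lambda_1'$, the number of parts and near-hook shapes, but you supply no mechanism that makes this analysis finite: a general $\lambda\vdash 2p^k$ admits very many subpartitions $\mu\vdash p^k$, and without a canonical choice one cannot enumerate the failures. The paper's key device is the operator $\Omega_2$ of Definition~\ref{def:omega} (a balanced halving of $\lambda$, imported from \cite{GN}): when $\lambda$ has fewer than $p^k$ rows and columns one always cuts along $\Omega=\Omega_2(\lambda)$, one proves once and for all that $[\lambda\setminus\Omega]$ is never $[(p^k-1,1)]$ or its rotation (Lemma~\ref{lem:skew}), and then Lemmas~\ref{lem:filling}, \ref{lem:filling-type-(m-1,1)} and~\ref{lem:BK} pin the possible failures down to an explicit finite list (rectangles $((2p^b)^{p^a})$, and the shapes with $\Omega\in\{(3,1^{p^k-3}),(2^2,1^{p^k-4})\}$ treated in Lemma~\ref{lem:q=2-not-omega}), each dispatched by an explicit pair $(\gamma,\delta)$; when $\lambda$ has at least $p^k$ rows one instead peels off a column $(1^{p^k})$ and tabulates the few bad cases. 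None of this is routine, and your proposal does not contain it.

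A smaller but real omission of the same kind occurs in your inductive step: the residual cases where $\mathcal{LR}([\lambda\setminus\Omega_q(\lambda)])\subseteq\{(p^k-1,1),(2,1^{p^k-2})\}$ do occur (they force $k=1$ and $\lambda=(p+2,(p+1)^{p-2})$ or $(p^{p-1})$), and the non-propagation of the two extra $q=2$ exceptions to $q=3$ requires checking the four partitions $(p^k,p^k,1^{p^k})$, $(p^k+1,p^k+1,1^{p^k-2})$, $(2^{p^k},1^{p^k})$, $(3^{p^k})$ by hand; your sketch asserts these situations ``simply do not occur'' or ``do not propagate'' without argument. In short, you have correctly located the difficulty but not overcome it.
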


\begin{rem}\label{rem: p^k=5}
We excluded the case $p^k=5$ from the statement of Proposition \ref{prop:have-property-star} for the sole reason that this statement is false if $q=2$ and $p^k=5$. 
Nevertheless Proposition \ref{prop:eugenew} holds for $p^k=5$, as we will remark at the end of this section. 
\end{rem}

The rest of this section is devoted to proving Proposition \ref{prop:have-property-star}, by induction on $q$.
A fundamental tool is 
the operator $\Omega_q:\mathcal{P}(qn)\rightarrow\mathcal{P}((q-1)n)$.
This was first defined in \cite[Section 3]{GN} and it is recalled below for the convenience of the reader. Given compositions $\mu$ and $\nu$, we denote by $\mu^\star$ the unique partition obtained by reordering the parts of $\mu$, and by $\mu\circ\nu$ the composition of $|\mu|+|\nu|$ obtained by concatenating $\mu$ and $\nu$.

\begin{defn}\label{def:order}
Let $q$ and $n$ be natural numbers with $q\ge 2$. Let $\lambda\in\mathcal{P}(qn)$. We can uniquely write $\lambda$ as $\lambda=(\mu\circ \nu)^\star$ where $\mu$ is the partition consisting of all the parts of $\lambda$ that are not divisible by $q$ and $\nu$ is the partition consisting of all the parts of $\lambda$ that are multiples of $q$. 
In particular we have that $$\lambda=\big((k_1q+x_1,\ldots, k_tq+x_t)\circ (r_1q,\ldots ,r_sq)\big)^\star,\qquad(*)$$
where $k_1\geq k_2\geq \cdots\geq k_t\geq 0$, $r_1\geq r_2\geq \cdots\geq r_s> 0$ and where $x_j\in\{1,\ldots, q-1\}$ for all $j\in\{1,\ldots, t\}$.
Since $\lambda\vdash qn$ there exists $\zeta_q(\lambda)\in\mathbb{N}_{0}$ such that $x_1+x_2+\cdots + x_t=\zeta_q(\lambda)q$.
Notice that $\zeta_q(\lambda)=\frac{1}{q}(x_1+\cdots + x_t)\leq t\cdot \frac{q-1}{q}\leq t$.
We denote by $A_\lambda$ the multiset of $q$-residues $\{x_1,\ldots, x_t\}$. We define a total order $\succ$ on the indexing set $\{1,2,\ldots, t\}$ as follows. Let $i,j$ be distinct elements in $\{1,\ldots, t\}$. If $x_i>x_j$ then $i\succ j$. When $x_i=x_j$ then we let $i\succ j$ if and only if $i>j$. 

We denote by $\lambda_{\succ}$ the composition
$$\lambda_{\succ}=(k_{i_1}q+x_{i_1},k_{i_2}q+x_{i_2}\ldots, k_{i_t}q+x_{i_t}),$$
where $i_1,\ldots, i_t\in\{1,\ldots, t\}$ are such that $i_1\succ i_2\succ \cdots\succ i_t$. 
\end{defn}

\begin{defn}\label{def:omega}
Given $\lambda$ as in equation $(*)$ above we let $\Omega_q(\lambda)_{\succ}$ be the composition  defined by 
$\Omega_q(\lambda)_{\succ}=\big(\lambda_{\succ}-(k_{i_1}+1,\ldots, k_{i_{\zeta_q(\lambda)}}+1, k_{i_{\zeta_q(\lambda)}},\ldots, k_{i_t})\big).$
Moreover we denote by $\Omega_q(\lambda)$
the partition of $(q-1)n$ defined by 
$\Omega_q(\lambda)=\big[\Omega_q(\lambda)_{\succ}\circ (r_1(q-1), \ldots r_s(q-1))\big]^\star.$
\end{defn}

The partition $\Omega_q(\lambda)$ should be thought of as being obtained from $\lambda$ by multiplying each part of $\lambda$ by $\tfrac{q-1}{q}$, with appropriate rounding. In particular, for all $i$ we have that $\lambda_i-\Omega_q(\lambda)_i \in\{\lfloor\tfrac{\lambda_i}{q}\rfloor, \lfloor\tfrac{\lambda_i}{q}\rfloor+1 \}$, and so $\tfrac{q-1}{q}\lambda_i-1<\Omega_q(\lambda)_i<\tfrac{q-1}{q}\lambda_i+1$.

\begin{exmp}
Let $q=3$, $n=16$ and let $\lambda=(9,8,7,7,6,4,4,3)\vdash 48$. Then we have that $\zeta_3(\lambda)=2$ and that $\Omega_3(\lambda)=(6,5,5,5,4,3,2,2)$.
\end{exmp}

\begin{lem}\label{lem:skew}
Let $q,n\in\mathbb{N}$ be such that $q\ge 2$ and $n\ge 5$. Let $\lambda\vdash qn$. Then $$[\lambda\setminus\Omega_q(\lambda)]\notin\{[(n-1,1)], [(n-1,1)]^\circ\}.$$
\end{lem}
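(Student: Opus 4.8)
The plan is to compare sizes and shapes directly, using the quantitative estimate stated right after Definition~\ref{def:omega}, namely that $\tfrac{q-1}{q}\lambda_i - 1 < \Omega_q(\lambda)_i < \tfrac{q-1}{q}\lambda_i + 1$ for every $i$. Write $\gamma = [\lambda\setminus\Omega_q(\lambda)]$; then $|\gamma| = qn - (q-1)n = n$, so if $\gamma$ or $\gamma^\circ$ is a translate of $[(n-1,1)]$, then $\gamma$ consists of $n$ boxes occupying exactly two rows, with $n-1$ boxes in one row and a single box in the other. First I would record that $\gamma$ lives in the rows of $[\lambda]$: row $i$ of $\gamma$ has exactly $\lambda_i - \Omega_q(\lambda)_i \in \{\lfloor \lambda_i/q\rfloor, \lfloor \lambda_i/q\rfloor + 1\}$ boxes. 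So the hypothesis forces all but two of these differences to vanish and the two surviving rows to contribute $n-1$ and $1$ boxes respectively.

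Next I would exploit that a row $i$ contributes zero boxes to $\gamma$ only when $\lfloor \lambda_i/q\rfloor = 0$, i.e. $\lambda_i < q$ (and $\lambda_i$ is then one of the small residue parts, contributing to $\zeta_q(\lambda)$ but removed with no change, which happens precisely when its index is not among the first $\zeta_q(\lambda)$ in the $\succ$-order). In particular every part $\lambda_i \ge q$ contributes at least one box to $\gamma$. Since $\gamma$ has $n-1$ boxes in a single row, some part $\lambda_i$ satisfies $\lambda_i - \Omega_q(\lambda)_i = n-1$, and by the estimate $n - 1 < \lambda_i/q + 1$, so $\lambda_i > (n-2)q \ge n$ when... more carefully, $\lambda_i \ge q(n-2)$, which for $q\ge 2$ and $n\ge 5$ forces $\lambda_i$ to be by far the dominant part and in fact $\lambda_i = \lambda_1$. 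A short computation with $|\lambda| = qn$ then pins down the remaining parts: the total size left over is $qn - \lambda_1$, and since only one further row may contribute a (single) box while all other rows contribute none, each remaining part is less than $q$; but the box-count from row $i$ being $0$ forces $\lambda_i < q$, and the box-count of the single surviving small row being $1$ forces $q \le \lambda_i < 2q$ there. Adding these up and comparing with $qn - \lambda_1$ yields a rigid numerical system that I would show has no solution for $n \ge 5$ — essentially because $\lambda_1 \le qn$ and $\lambda_1 \ge q(n-2)+1$ squeeze $\lambda_1$ into a short interval, and then the residual mass $qn - \lambda_1 \le 2q - 1$ cannot simultaneously be split into one part in $[q, 2q)$ plus parts all $< q$ whose $q$-quotients all vanish while the $\succ$-order bookkeeping of $\Omega_q$ is respected. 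The case where $\gamma^\circ \cong [(n-1,1)]$ (the long row of $\gamma$ sitting below the short one, i.e. the short row has the larger index) is handled identically, or reduced to the previous case by the rotation symmetry together with Lemma~\ref{lem:star-conj}-type conjugation arguments — though I would likely just redo the inequality chase, as it is no longer.

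The main obstacle I anticipate is the careful handling of the $\succ$-ordering in Definition~\ref{def:order}: the rows that get an \emph{extra} box (the "$+1$" in $\Omega_q(\lambda)_\succ$) are exactly the top $\zeta_q(\lambda)$ rows in the $\succ$-order among the non-$q$-divisible parts, and this order mixes residue size with original index, so "which rows contribute how many boxes" is not purely a function of $\lambda_i \bmod q$. I would need to argue that, regardless of which rows receive the extra box, one cannot arrange for exactly two nonzero rows with box-counts $n-1$ and $1$; the cleanest route is probably to first dispose of the generic case via the crude size inequality (showing $\lambda_1$ must be enormous, hence essentially all the mass is in row $1$, hence $\gamma$ has many more than two nonempty rows unless $\lambda$ is very close to a hook, and then check the few near-hook shapes by hand using $n\ge 5$), and only then worry about the residue/order fine print in the small residual cases. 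The hypotheses $q \ge 2$ and $n \ge 5$ are exactly what make the crude inequality bite — for small $n$ there are genuine exceptions, consistent with the $p^k \in \{3,5\}$ exclusions elsewhere in the paper — so I would make sure the estimate $\lambda_1 > q(n-2)$ together with $n\ge 5$ is used decisively.
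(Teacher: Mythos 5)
There is a genuine gap: your argument tries to derive the contradiction purely from the sizes of the parts of $\lambda$ (how many boxes each row contributes), and such a contradiction does not exist. Take $q=2$, $n=5$, $\lambda=(8,2)\vdash 10$: here $\Omega_2(\lambda)=(4,1)$, row $1$ contributes $n-1=4$ boxes, row $2$ contributes exactly $1$ box, all the numerical constraints you list are satisfied ($\lambda_1=8\ge q(n-2)$, the residual mass $2$ is a single part in $[q,2q)$, the $\succ$-bookkeeping is trivial since both parts are divisible by $q$), and yet the lemma is true for this $\lambda$ --- because the skew shape consists of four boxes in columns $5$--$8$ of row $1$ and one box in column $2$ of row $2$, which is \emph{not} a translate of $[(4,1)]$. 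So the ``rigid numerical system'' you propose to show has no solution in fact has solutions, and the contradiction must come from the geometry of the skew shape, not from box counts.

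The missing idea is the alignment condition hidden in ``$[\lambda\setminus\Omega]\cong[(n-1,1)]$'': this forces the two nonempty rows to be consecutive, say $j$ and $j+1$, with the single box of row $j+1$ directly below the leftmost box of row $j$, i.e.\ $\Omega_j=\Omega_{j+1}$ (and for the $180^\circ$ rotation, right-alignment, i.e.\ $\lambda_j=\lambda_{j+1}$). Once you have that, the paper's proof is a one-liner using exactly the estimate you quote: $\lambda_j-\Omega_j=n-1$ gives $\lambda_j\ge (n-2)q$ and hence $\Omega_j\ge (n-2)(q-1)$, while $\lambda_{j+1}-\Omega_{j+1}=1$ gives $\lambda_{j+1}<2q$ and hence $\Omega_{j+1}\le 2(q-1)$, contradicting $\Omega_j=\Omega_{j+1}$ since $n\ge 5$; the rotated case contradicts $\lambda_j=\lambda_{j+1}$ directly. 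This also makes all of your worrying about the $\succ$-order irrelevant, and note in passing that your claim that the row contributing one box must satisfy $q\le\lambda_i<2q$ is also false: a part $\lambda_i<q$ contributes one box whenever it receives the ``$+1$'' in Definition~\ref{def:omega}.
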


\begin{proof}
Let $\Omega:=\Omega_q(\lambda)$. Suppose for a contradiction that $[\lambda\setminus\Omega]\cong[(n-1,1)]$. Then there exists $j\in\mathbb{N}$ such that $\lambda_j-\Omega_j = n-1$, $\lambda_{j+1}-\Omega_{j+1}=1$ and $\Omega_j=\Omega_{j+1}$. Thus $\lambda_j\ge(n-2)q$ and $\lambda_{j+1}<2q$. It follows that $\Omega_j\geq (n-2)(q-1)> 2(q-1)\geq \Omega_{j+1}$, which is a contradiction.
Similarly, let us now assume that $[\lambda\setminus\Omega]\cong[(n-1,1)]^\circ$. Then there exists $j\in\mathbb{N}$ such that $\lambda_j-\Omega_j = 1$, $\lambda_{j+1}-\Omega_{j+1}=n-1$ and $\lambda_j=\lambda_{j+1}$. It follows that $\lambda_j<2q$ whilst $\lambda_{j+1}\ge(n-2)q$, which is a contradiction.
\end{proof}

We will also need the following technical lemma. 

\begin{lem}\label{lem:q=2-not-omega}
Let $p^k\notin\{ 3, 5\}$ and let $\lambda=(\lambda_1,\dotsc,\lambda_t)\in A(2,p^k)$. Suppose that $t,\lambda_1<p^k$. If $\Omega_2(\lambda)\in\{(3,1^{p^k-3}), (2^2,1^{p^k-4})\}$ then $\lambda\in\mathcal{D}(2,p^k)$ 
\end{lem}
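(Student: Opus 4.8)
The strategy is to reduce from a skew partition $\lambda \in A(2,p^k)$ with $\Omega_2(\lambda)$ equal to one of the two specified near-hooks to the existence of a suitable Littlewood--Richardson filling, and then use the inductive hypothesis (in the form of Theorem~\ref{thm:n-prime-power} for the exponent $k$) to conclude $\lambda \in \mathcal{D}(2,p^k)$. First I would recall that $\lambda \in \mathcal{D}(2,p^k)$ means precisely that $(\chi^\lambda)\!\down_{\fS_{p^k}\times\fS_{p^k}}$ has a constituent $\chi^{\mu_1}\times\chi^{\mu_2}$ with $\mu_1,\mu_2 \in \Delta(p^k)$ and $\mu_1 \neq \mu_2$; by the Littlewood--Richardson rule (Theorem~\ref{thm:LR}) and its symmetry $c^\lambda_{\mu_1\mu_2}=c^\lambda_{\mu_2\mu_1}$, such a constituent exists as soon as we can find $\mu_1,\mu_2 \vdash p^k$ with $[\lambda\setminus\mu_1]$ admitting an LR-filling of type $\mu_2$, both $\mu_i \in \Delta(p^k)$, and $\mu_1\neq\mu_2$.

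The natural candidate for $\mu_1$ is $\Omega_2(\lambda)$: by Definition~\ref{def:omega} this is a genuine partition of $p^k$ obtained by halving the parts of $\lambda$ with rounding, and the design of $\Omega_2$ guarantees $[\lambda\setminus\Omega_2(\lambda)]$ is a legitimate skew shape with $p^k$ boxes (this is the content for which $\Omega_q$ was introduced in \cite{GN}), so $\mathcal{LR}([\lambda\setminus\Omega_2(\lambda)])$ is non-empty — pick any $\mu_2$ in it. Now by hypothesis $\Omega_2(\lambda) \in \{(3,1^{p^k-3}),(2^2,1^{p^k-4})\}$; since $p^k \notin\{3,5\}$ we have $p^k \geq 7$, and one checks directly that both of these partitions lie in $\Delta(p^k) = \mathcal{P}(p^k)\setminus\{(p^k-1,1),(2,1^{p^k-2})\}$ — they are neither of the two excluded near-hooks. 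So $\mu_1 := \Omega_2(\lambda) \in \Delta(p^k)$, and it remains to arrange that the chosen $\mu_2 \in \mathcal{LR}([\lambda\setminus\mu_1])$ can be taken in $\Delta(p^k)$ and distinct from $\mu_1$. The constraints $t < p^k$ and $\lambda_1 < p^k$ in the hypothesis are exactly what rule out $\mu_2$ being forced to equal $(1^{p^k})$ or $(p^k)$ respectively (a shape with fewer than $p^k$ rows and parts bounded by $p^k-1$ cannot be skew-filled purely by a single column or single row unless it degenerates), and more care is needed to avoid $\mu_2 \in \{(p^k-1,1),(2,1^{p^k-2})\}$ or $\mu_2 = \mu_1$: here I would invoke Lemma~\ref{lem:filling-type-(m-1,1)} and Lemma~\ref{lem:BK} to show that whenever the \emph{only} available types are bad ones, the skew shape $[\lambda\setminus\mu_1]$ must be $\cong [(p^k-1,1)]$ or its rotation, which Lemma~\ref{lem:skew} forbids, or else the shape is small enough that $|\mathcal{LR}|=1$ situations can be enumerated and seen (using $\lambda \in A(2,p^k)$, which excludes $(p^k,p^k)$ and $(2^{p^k})$) to still yield a good $\mu_2$.

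The main obstacle, then, is the bookkeeping in this last step: verifying in every sub-case that at least one LR-type of $[\lambda\setminus\Omega_2(\lambda)]$ avoids the four forbidden partitions of $p^k$ and differs from $\Omega_2(\lambda)$ itself. I expect this to split into the two cases $\Omega_2(\lambda)=(3,1^{p^k-3})$ and $\Omega_2(\lambda)=(2^2,1^{p^k-4})$, in each of which one describes the possible $\lambda$ (the parts of $\lambda$ are, up to rounding, doubles of the parts of $\Omega_2(\lambda)$, so $\lambda$ has very restricted shape — roughly $\lambda_1 \in \{5,6,7\}$ or $\lambda_1\in\{3,4,5\}$ with the rest a near-column), and then one directly exhibits a filling. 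I would handle the genuinely tight cases by hand and note that for all remaining $\lambda$ the skew shape has enough freedom — at least two distinct rows contributing, by Lemma~\ref{lem:filling}, and not a hook by Lemma~\ref{lem:skew} — that $|\mathcal{LR}([\lambda\setminus\Omega_2(\lambda)])| \geq 2$ by Lemma~\ref{lem:BK}, forcing a good choice of $\mu_2$. This completes the argument that $\lambda \in \mathcal{D}(2,p^k)$.
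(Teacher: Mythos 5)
Your overall plan lands in the right place: the hypothesis $\Omega_2(\lambda)\in\{(3,1^{p^k-3}),(2^2,1^{p^k-4})\}$ forces every part of $\lambda$ to be small (roughly twice the corresponding part of $\Omega_2(\lambda)$), so $\lambda$ is confined to a short explicit list, and one then exhibits for each such $\lambda$ a constituent $\chi^\gamma\times\chi^\delta$ of $\chi^\lambda\down_{\fS_{p^k}\times\fS_{p^k}}$ with $\gamma\ne\delta$ both in $\Delta(p^k)$. This is exactly what the paper does: it enumerates the roughly sixteen possible $\lambda$ (using $t,\lambda_1<p^k$ and $|\lambda|=2p^k$ to pin them down) and tabulates an explicit pair $(\gamma,\delta)$ for each. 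However, your proposed shortcut for avoiding that enumeration contains a genuine gap: the claim that $|\mathcal{LR}([\lambda\setminus\Omega_2(\lambda)])|\ge 2$ "forces a good choice of $\mu_2$" is a non sequitur. Lemma~\ref{lem:BK} only tells you there are at least two distinct types; all of them could still lie in the three-element forbidden set $\{(p^k-1,1),(2,1^{p^k-2}),\Omega_2(\lambda)\}$, so nothing short of actually computing the fillings of each of the listed skew shapes (or exhibiting fresh pairs directly, as the paper does) closes the argument. Relatedly, Lemma~\ref{lem:skew} only excludes $[(p^k-1,1)]$ and its rotation, not $[(2,1^{p^k-2})]$, so it does not by itself rule out all the degenerate hook shapes you need to avoid.

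A second, more structural concern: insisting that one member of the pair be $\mu_1=\Omega_2(\lambda)$ is the wrong instinct for this particular lemma. Within the proof of Proposition~\ref{prop:q=2}, the lemma is invoked precisely at the moment when the strategy "take $\Omega_2(\lambda)$ together with a type of $[\lambda\setminus\Omega_2(\lambda)]$" has broken down, namely when the only available type in $\Delta(p^k)$ coincides with $\Omega_2(\lambda)$ itself. So the lemma has to be proved by a mechanism that does not route through $\Omega_2(\lambda)$; the paper simply chooses both $\gamma$ and $\delta$ freely for each listed $\lambda$ (for instance, for $\lambda=(5,3,2^{p^k-4})$ it takes $\gamma=(3,2,1^{p^k-5})$ and $\delta=(4,1^{p^k-4})$, neither of which is $\Omega_2(\lambda)$). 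Two further small points: $(p^k)$ and $(1^{p^k})$ do belong to $\Delta(p^k)$, so you need not exclude them as candidate types (the exclusions of $(2p^k)$, $(1^{2p^k})$, $(p^k,p^k)$, $(2^{p^k})$ in $A(2,p^k)$ concern $\lambda$, not the constituents); and since $\lambda_1\le 7$ here, the hypotheses $t,\lambda_1<p^k$ are used only to control the number of parts of $\lambda$ in the enumeration, not to rule out extreme types. In short: the skeleton is right, but the proof is only complete once the case-by-case verification is actually carried out, and the one general-purpose step you offer in its place does not hold.
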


\begin{proof}
The key idea in this proof is that there are very few partitions $\lambda\in\mathcal{P}(2p^k)$ such that $\Omega=\Omega_2(\lambda)\in\{(3,1^{p^k-3}), (2^2,1^{p^k-4})\}$. 
We list all of them below. Moreover, for each of these partitions we explicitly exhibit a pair $\gamma, \delta\in\Delta(p^k)$ such that $\gamma\neq \delta$ and such that $c_{\gamma,\delta}^\lambda\neq 0$. 
Clearly this implies that $\lambda\in\mathcal{D}(2,p^k)$.

Suppose that $\Omega=(3,1^{p^k-3})$, then $t=p^k-2$ or $p^k-1$. By Definition \ref{def:omega}, $1\le\lambda_t\le \lambda_{p^k-2}\le 3$ and $5\le\lambda_1\le 7$. 
If $\lambda_t=3$, or $\lambda_t=2$ and $t=p^k-1$, then $|\lambda|>2p^k$ which is a contradiction. 
If $\lambda_t=2$ and $t=p^k-2$ then $\lambda=(5,3,2^{p^k-4})$ or $(6,2^{p^k-3})$.
Now assume that $\lambda_t=1$.
If $t=p^k-1$ and $\lambda_{t-2}\ge 2$, then $\lambda=(6,2^{p^k-4},1^2)$, $(5,3,2^{p^k-5},1^2)$ or $(5,2^{p^k-3},1)$.
If $t=p^k-1$ and $\lambda_{t-2}=1$, or $t=p^k-2$, then by Definition~\ref{def:omega} we have $\Omega_{p^k-2}=0$, which is a contradiction.
	
In the table below we list the aforementioned possibilities for $\lambda$ (first column) and exhibit a pair $\gamma, \delta\in\Delta(p^k)$ such that $\gamma\neq \delta$ and such that $c_{\gamma,\delta}^\lambda\neq 0$ (second column). 

\begin{center}
\begin{tabular}{|c|c|}
\hline
$\lambda$ & $\gamma,\delta$\\
\hline
\hline
$(5,3,2^{p^k-4})$ & \multirow{2}{*}{$(3,2,1^{p^k-5}),\ (4,1^{p^k-4})$}\\
\cline{1-1}
$(6,2^{p^k-3})$ & \\
\hline
$(6,2^{p^k-4},1^2)$ & $(4,1^{p^k-4}),\ (3,1^{p^k-3})$\\
\hline
$(5,3,2^{p^k-5},1^2)$ & \multirow{2}{*}{$(3,2,1^{p^k-5}),\ (3,1^{p^k-3})$}\\
\cline{1-1}
$(5,2^{p^k-3},1)$ & \\
\hline
\end{tabular}
\end{center}

\medskip

Now let $\Omega=(2^2,1^{p^k-4})$, then $t=p^k-2$ or $p^k-1$. By Definition \ref{def:omega}, $1\le\lambda_t\le 3$ and $3\le\lambda_1,\lambda_2\le 5$. 
If $\lambda_t=3$ then $|\lambda|>2p^k$, a contradiction. 
If $\lambda_t=2$ then $\lambda=(3,3,2^{p^k-3})$, $(5,3,2^{p^k-4})$, $(4,4,2^{p^k-4})$, $(4,3,3,2^{p^k-5})$ or $(3^4,2^{p^k-6})$. 
Finally, suppose that $\lambda_t=1$. 
If $t=p^k-1$ and $\lambda_{t-2}\ge 2$, then $\lambda=(5,3,2^{p^k-5},1^2)$, $(4,4,2^{p^k-5},1^2)$, $(4,3,3,2^{p^k-6},1^2)$, $(3^4,2^{p^k-7},1^2)$, $(4,3,2^{p^k-4},1)$ or $(3^3,2^{p^k-5},1)$. 
If $t=p^k-1$ and $\lambda_{t-2}=1$, or $t=p^k-2$, then by Definition~\ref{def:omega} we have $\Omega_{p^k-2}=0$, which is a contradiction.
	
As done for the previous case, in the tables below we list the aforementioned possibilities for $\lambda$ and exhibit a pair $\gamma, \delta\in\Delta(p^k)$ such that $\gamma\neq \delta$ and such that $c_{\gamma,\delta}^\lambda\neq 0$. 
	
\begin{center}
\begin{tabular}{|c|c|}
\hline
$\lambda$ & $\gamma,\delta$\\
\hline
\hline
$(3,3,2^{p^k-3})$ & \multirow{2}{*}{$(2^2,1^{p^k-4}),\ (3,2,1^{p^k-5})$}\\
\cline{1-1}
$(5,3,2^{p^k-4})$ & \\
\hline
$(4,4,2^{p^k-4})$ & \multirow{2}{*}{$(3,2,1^{p^k-5}),\ (4,2,1^{p^k-6})$}\\
\cline{1-1}
$(4,3,3,2^{p^k-5})$ & \\
\hline
$(3^4,2^{p^k-6})$ & $(3,2,1^{p^k-5}),\ (3,2,2,1^{p^k-7})$\\
\hline
\end{tabular}
\hspace{10pt}
\begin{tabular}{|c|c|}
\hline
$\lambda$ & $\gamma,\delta$\\
\hline
\hline
$(5,3,2^{p^k-5},1^2)$ & \multirow{5}{*}{$(3,2,1^{p^k-5}),\ (3,1^{p^k-3})$}\\
\cline{1-1}
$(4,4,2^{p^k-5},1^2)$ & \\
\cline{1-1}
$(4,3,3,2^{p^k-6},1^2)$ & \\
\cline{1-1}
$(4,3,2^{p^k-4},1)$ & \\
\cline{1-1}
$(3^3,2^{p^k-5},1)$ & \\
\hline
$(3^4,2^{p^k-7},1^2)$ & $(3,2,1^{p^k-5}),\ (2^3,1^{p^k-6})$\\
\hline
\end{tabular}
\end{center}

Thus in all cases, $\lambda\in\mathcal{D}(2,p^k)$.
\end{proof}

We now turn to the proof of Proposition~\ref{prop:have-property-star}. The proof is done by induction on $q$ for each fixed $p$ and $k$. For the remainder of this section, fix $p$ an odd prime and $k\in\mathbb{N}$ such that $p^k\ne 3,5$. We begin with the case where $q=2$.

\begin{prop}\label{prop:q=2}
Let $p^k\ne 3,5$. Then $\mathcal{D}(2,p^k)=A(2,p^k)$.
\end{prop}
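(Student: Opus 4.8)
\textbf{Proof proposal for Proposition~\ref{prop:q=2}.}

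The plan is to prove both inclusions $\mathcal{D}(2,p^k)\subseteq A(2,p^k)$ and $A(2,p^k)\subseteq\mathcal{D}(2,p^k)$. The first inclusion should be the easier direction: if $\lambda\notin A(2,p^k)$, then $\lambda$ is one of the six excluded partitions $(2p^k),(2p^k-1,1),(p^k,p^k),(2^{p^k}),(2,1^{2p^k-2}),(1^{2p^k})$, and for each of these one checks directly, using the Littlewood\textendash Richardson rule (Theorem~\ref{thm:LR}) to compute $(\chi^\lambda)\down_{\fS_{p^k}\times\fS_{p^k}}$, that every constituent $\chi^{\mu_1}\times\chi^{\mu_2}$ either has $\mu_1=\mu_2$ or has some $\mu_i\notin\Delta(p^k)$. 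For instance $\chi^{(2p^k)}\down = \sum_{a} \chi^{(a,\ldots)}\times\chi^{(\ldots)}$ ranges over pairs of one-row partitions or partitions containing $(p^k-1,1)$ or $(p^k)$ as one factor; $\chi^{(p^k,p^k)}\down$ has the "diagonal'' constituent $\chi^{(p^k)}\times\chi^{(p^k)}$ and the others involve $(p^k-1,1)$-type factors — all of this is routine once the relevant LR fillings are listed, so I would only sketch it.

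For the harder inclusion $A(2,p^k)\subseteq\mathcal{D}(2,p^k)$, the idea is to use the operator $\Omega_2:\mathcal{P}(2p^k)\to\mathcal{P}(p^k)$ together with the key structural fact (established in \cite{GN}, which I would cite) that $c^\lambda_{\mu,\Omega_2(\lambda)}\neq 0$ for a suitable $\mu\vdash p^k$; more precisely, $\Omega_2(\lambda)$ is always an element of $\mathcal{LR}([\lambda\setminus\mu])$ for an appropriate choice of $\mu$, so that $\chi^\mu\times\chi^{\Omega_2(\lambda)}$ appears in $(\chi^\lambda)\down_{\fS_{p^k}\times\fS_{p^k}}$. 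Given $\lambda\in A(2,p^k)$, I would first reduce to the case $t=\ell(\lambda)<p^k$ and $\lambda_1<p^k$ using Lemma~\ref{lem:star-conj} (conjugation symmetry) and a direct argument: if $\lambda_1\ge p^k$ then writing $\lambda=(\lambda_1)\circ\bar\lambda$ and peeling off a row gives a constituent $\chi^{(p^k)}\times\chi^{\Omega}$ or one can directly exhibit a good pair, and dually if $t\ge p^k$. Then, with both $t,\lambda_1<p^k$, I want to show that $\Omega:=\Omega_2(\lambda)\in\Delta(p^k)$ and that the "complementary'' partition $\mu$ with $c^\lambda_{\mu,\Omega}\neq 0$ also lies in $\Delta(p^k)$ and is distinct from $\Omega$; Lemma~\ref{lem:skew} handles the case $[\lambda\setminus\Omega]\cong[(p^k-1,1)]$ or its rotation, ruling out exactly the bad shapes. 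The genuinely delicate points are the boundary cases where $\Omega$ (or $\mu$) lands on $(p^k)$, $(p^k-1,1)$, $(2,1^{p^k-2})$ or $(1^{p^k})$ — i.e.\ outside $\Delta(p^k)$ — which is precisely what Lemma~\ref{lem:q=2-not-omega} is designed to absorb: whenever $\Omega_2(\lambda)\in\{(3,1^{p^k-3}),(2^2,1^{p^k-4})\}$ (the "one step away from $(2,1^{p^k-2})$'' situations), that lemma directly lists the finitely many $\lambda$ and exhibits an explicit good pair $\gamma,\delta\in\Delta(p^k)$; a conjugate version handles $\Omega_2(\lambda)$ near $(p^k-1,1)$.

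So the main obstacle is the careful case analysis at the boundary of $\Delta(p^k)$: controlling exactly when $\Omega_2(\lambda)$ or its LR-complement $\mu$ falls into the four (or six, for the $q=2$ bookkeeping) forbidden partitions, and producing an alternative decomposition $\chi^\gamma\times\chi^\delta$ with $\gamma\neq\delta$, both in $\Delta(p^k)$, in those cases. I expect this to require: (a) a clean description of which $\mu\vdash p^k$ satisfy $c^\lambda_{\mu,\Omega_2(\lambda)}\neq 0$ and how small $\mu$ can be forced to be; (b) invoking Lemma~\ref{lem:filling-type-(m-1,1)} to understand skew shapes whose LR-types include $(p^k-1,1)$, so as to find a second admissible type avoiding the bad partitions; and (c) the explicit finite checks packaged in Lemma~\ref{lem:q=2-not-omega} (and its conjugate). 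The hypotheses $p^k\neq 3,5$ enter exactly here — for $p^k=5$ the small partitions $(3,1,1)$, $(2,2,1)$ coincide with boundary cases where no good pair in $\Delta(5)$ exists, which is why that case is excluded (cf.\ Remark~\ref{rem: p^k=5}), and $p^k=3$ is degenerate since $\Delta(3)$ is too small. Once $A(2,p^k)\subseteq\mathcal{D}(2,p^k)$ is shown and combined with the reverse inclusion, the proposition follows.
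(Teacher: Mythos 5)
Your overall strategy coincides with the paper's: peel off a strip to dispose of partitions with $t\ge p^k$ or $\lambda_1\ge p^k$, reduce to $t,\lambda_1<p^k$ via Lemma~\ref{lem:star-conj}, and then analyse $\mathcal{LR}([\lambda\setminus\Omega_2(\lambda)])$ using Lemmas~\ref{lem:skew}, \ref{lem:filling-type-(m-1,1)} and~\ref{lem:q=2-not-omega}. However, there is one case your plan does not cover and where it would genuinely fail: the case where $\mathcal{LR}([\lambda\setminus\Omega_2(\lambda)])$ is the singleton $\{\Omega_2(\lambda)\}$, i.e.\ (by Lemma~\ref{lem:BK}) where $[\lambda\setminus\Omega_2(\lambda)]$ or its rotation is a translate of $[\Omega_2(\lambda)]$ itself. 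This happens precisely when $\lambda$ is a rectangle $((2p^b)^{p^a})$ with $a+b=k$ and $a,b\ge 1$ (or its conjugate); for instance $\lambda=((2p)^p)\vdash 2p^2$ gives $\Omega_2(\lambda)=(p^p)$ and the only constituent your scheme produces is $\chi^{(p^p)}\times\chi^{(p^p)}$, whose factors are equal and hence do not witness $\lambda\in\mathcal{D}(2,p^k)$. Your stated goal of showing that the LR-complement is ``distinct from $\Omega$'' is simply false here, and no second LR type of the skew shape exists; one must abandon $\Omega_2(\lambda)$ and build a different $\mu\vdash p^k$ by hand (the paper takes $\mu=((2^{p^b})^c,d+1,1^{p^a-c-1})$ and checks that $[\lambda\setminus\mu]$ has a unique LR type $\nu\ne\mu$ with $\mu,\nu\in\Delta(p^k)$). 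These rectangles lie in $A(2,p^k)$ for all $k\ge 2$, so this is not a vacuous case.

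Two smaller points. First, the reduction for $t\ge p^k$ is not as painless as ``peeling gives a good pair'': the peeled partition can land in $\{(1^{p^k}),(2,1^{p^k-2}),(p^k-1,1)\}$, forcing a finite but nontrivial list of exceptional $\lambda$ (such as $(2^{p^k-1},1^2)$, $(3,2^{p^k-2},1)$, $(p^k-1,1^{p^k+1})$, $(p^k,2,1^{p^k-2})$), each needing an explicit alternative constituent. Second, you slightly misstate the role of Lemma~\ref{lem:q=2-not-omega}: the partitions $(3,1^{p^k-3})$ and $(2^2,1^{p^k-4})$ lie \emph{inside} $\Delta(p^k)$, and the lemma is invoked not because $\Omega_2(\lambda)$ has fallen outside $\Delta(p^k)$ but because the second LR type $\nu$ extracted from the skew shape may coincide with $\Omega_2(\lambda)$, again producing equal factors. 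No ``conjugate version'' near $(p^k-1,1)$ is needed: once $\lambda_1<p^k$, every part of $\Omega_2(\lambda)$ is less than $\tfrac{p^k}{2}+1$, which for $p^k>5$ already excludes $\Omega_2(\lambda)\in\{(p^k),(p^k-1,1),(p^k-2,2),(p^k-2,1,1)\}$. With the rectangle case supplied and these finite lists made explicit, your argument becomes the paper's proof.
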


\begin{proof}
It is easy to check that $\mathcal{D}(2,p^k)\subseteq A(2,p^k)$. 
Hence we now let $\lambda=(\lambda_1,\lambda_2,\dotsc,\lambda_t)\in A(2,p^k)$ and we aim to show that $\lambda\in\mathcal{D}(2,p^k)$. 
To ease the notation, we let $\chi^\lambda\down:=\chi^\lambda\down_{\fS_{p^k}\times\fS_{p^k}}$ for the rest of this proof. First suppose $t\ge p^k$ and consider the partition $\mu$ obtained from $\lambda$ by reducing the length of each of the last $p^k$ parts of $\lambda$ by one. More precisely we  have $$\mu=(\lambda_1,\dotsc,\lambda_{t-p^k}, \lambda_{t-p^k+1}-1,\lambda_{t-p^k+2}-1,\dotsc,\lambda_t-1)\vdash p^k.$$
Then $(1^{p^k})\in\mathcal{LR}([\lambda\setminus\mu])$. If $\mu\in\Delta(p^k)\setminus\{(1^{p^k})\}$ then $\lambda\in\mathcal{D}(2,p^k)$. 
Otherwise, $\mu\in\{(1^{p^k}), (2,1^{p^k-2}), (p^k-1,1)\}$. 
If $\mu=(1^{p^k})$ then by inverting the process used to construct $\mu$
from $\lambda$,
we deduce that $\lambda\in\{(1^{2p^k}), (2^{p^k})\}.$ But this would imply that $\lambda\notin A(2,p^k)$, which is a contradiction. Thus $\mu$ cannot be equal to $(1^{p^k})$. 
In the following table, we consider the remaining possibilities for $\mu$ (first column) and list the consequent possibilities for $\lambda$ (second column). When a resulting candidate for $\lambda$ lies in $A(2,p^k)$, we exhibit $\gamma,\delta\in\Delta(p^k)$ such that $\gamma\ne\delta$ and $\chi^\gamma\times\chi^\delta$ is an irreducible constituent of $\chi^\lambda\down$ (third column), and hence we deduce that $\lambda\in\mathcal{D}(2,p^k)$. 

\begin{center}
\begin{tabular}{|c|cc|c|}
\hline
$\mu$ & $\lambda$ & & $\gamma,\delta$\\
\hline
\hline
\multirow{2}{*}{$(1^{p^k})$} & $(1^{2p^k})$ & $\notin A_p(k,2)$ & $-$\\
\cline{2-4}
& $(2^{p^k})$ & $\notin A_p(k,2)$ & $-$\\
\hline
\multirow{3}{*}{$(2,1^{p^k-2})$} & $(2,1^{2p^k-2})$ & $\notin A_p(k,2)$ & $-$\\
\cline{2-4}
& $(2^{p^k-1},1^2)$ & $\in A_p(k,2)$ & $(2^2,1^{p^k-4}),\ (2^3,1^{p^k-6})$\\
\cline{2-4}
& $(3,2^{p^k-2},1)$ & $\in A_p(k,2)$ & $(3,1^{p^k-3}),\ (2^2,1^{p^k-4})$\\
\hline
\multirow{3}{*}{$(p^k-1,1)$} & $(p^k-1,1^{p^k+1})$ & $\in A_p(k,2)$ & $(p^k-2,1^2),\ (1^{p^k})$\\
\cline{2-4}
& $(p^k-1,2,1^{p^k-1})$ & $\in A_p(k,2)$ & $(p^k-2,2),\ (1^{p^k})$\\
\cline{2-4}
& $(p^k,2,1^{p^k-2})$ & $\in A_p(k,2)$ & $(p^k-2,2),\ (3,1^{p^k-3})$\\
\hline
\end{tabular}
\end{center}

\medskip

In light of the above discussion we may now assume that $t<p^k$. By Lemma~\ref{lem:star-conj}, we may also assume $\lambda_1<p^k$.
Let $\Omega=\Omega_2(\lambda)\vdash p^k$ and let $\gamma=[\lambda\setminus\Omega]$. If $\Omega\in\Delta(p^k)$ and there exists some $\nu\in\mathcal{LR}(\gamma)$ with $\nu\in\Delta(p^k)\setminus\{\Omega\}$, then $\lambda\in\mathcal{D}(2,p^k)$. Otherwise, either $$\Omega\in\{(p^k-1,1), (2,1^{p^k-2})\},\ \  \text{or}\ \ \Omega\in\Delta(p^k)\ \ \text{and}\ \  \mathcal{LR}(\gamma)\subseteq\{(p^k-1,1),(2,1^{p^k-2}),\Omega\}.$$ Since $\lambda_1<p^k$ then $\Omega\neq (p^k-1,1)$. 
If $\Omega=(2,1^{p^k-2})$ then we immediately deduce that $t=p^{k-1}$ and $\lambda_t\in\{2,3\}$, by Definition \ref{def:omega}. If $\lambda_t=3$ then $|\lambda|>2p^k$, a contradiction. Therefore $\lambda_t=2$ and thus $\lambda\in\{(3^2,2^{p^k-3}), (4,2^{p^k-2})\}$, a subset of $\mathcal{D}(2,p^k)$ by direct verification. Hence we can assume $\Omega\in\Delta(p^k)$ and $\mathcal{LR}(\gamma)\subseteq\{(p^k-1,1),(2,1^{p^k-2}),\Omega\}$.

%(iii)
First suppose $(p^k-1,1)\in\mathcal{LR}(\gamma)$. Then $\gamma\notin\{[(p^k-1,1)],[(p^k-1,1)]^\circ\}$ by Lemma~\ref{lem:skew}. Therefore, there exists $\nu\in\mathcal{LR}(\gamma)\cap\{(p^k),(p^k-2,1,1),(p^k-2,2)\}$ by Lemma~\ref{lem:filling-type-(m-1,1)}. 
Since $\lambda_1<p^k$ and $p^k>5$ we deduce (from Definition \ref{def:omega}) that $\Omega\notin\{(p^k),(p^k-2,1,1),(p^k-2,2)\}$.
Thus $\lambda\in\mathcal{D}(2,p^k)$ since $\Omega\ne\nu$ and $\chi^\Omega\times\chi^\nu$ is an irreducible constituent of $\chi^\lambda\down$.

Next suppose $(2,1^{p^k-2})\in\mathcal{LR}(\gamma)$. By Lemma~\ref{lem:filling}, $\gamma$ must have at least $p^k-1$ non-empty rows, so $t \ge p^k-1$. Thus $t=p^k-1$ and $\lambda_i-\Omega_i=2$ for a unique $i\in\{1,2,\dotsc,t\}$ while $\lambda_j-\Omega_j=1$ for all $j\ne i$, since $|\gamma|=p^k$. If $\gamma\cong[(2,1^{p^k-2})]$ then $\Omega=(u^{p^k-1})$ for some $u\in\mathbb{N}$. But then $2p^k=|\lambda|=u(p^k-1)+p^k$, a contradiction. Also, if $\gamma^\circ\cong[(2,1^{p^k-2})]$ then $\lambda=(u^{p^k-1})$ for some $u\in\mathbb{N}$. But then $u(p^k-1)=2p^k$, a contradiction since $p^k>3$. 
Hence $\gamma\notin\{[(2,1^{p^k-2})], [(2,1^{p^k-2})]^\circ\}$. 
It is now easy to see that there exists some $\nu\in\mathcal{LR}(\gamma)\cap \{(2^2,1^{p^k-4}), (3,1^{p^k-3})\}$. Thus $\lambda\in\mathcal{D}(2,p^k)$: by Lemma~\ref{lem:q=2-not-omega} if $\Omega=\nu$, or by considering the constituent $\chi^\Omega\times\chi^\nu$ if $\Omega\neq \nu$.

\medskip

Finally we may assume that $\mathcal{LR}(\gamma)=\{\Omega\}$, and so either $\gamma\cong[\Omega]$ or $\gamma^\circ\cong[\Omega]$, by Lemma~\ref{lem:BK}. If $\gamma\cong[\Omega]$ then we deduce from Definition \ref{def:omega} that $\lambda$ is a rectangle of even width and odd height. More precisely, 
$\lambda=((2p^b)^{p^a})$ for some $a+b=k$, with $a,b\ge 1$ since $\lambda\in A(2,p^k)$.
Let $\mu=((2^{p^b})^c, d+1, 1^{p^a-c-1})$ where $p^k-p^a=c(2p^b-1)+d$ and $0\le d<2p^b-1$. Note that $c\ge 1$ and $p^a\ge c+1$. In particular, $\mu_1=\lambda_1$ and $\mu'_1=\lambda'_1$, and $[\lambda\setminus\mu]^\circ\cong[\nu]$ for some partition $\nu$. Hence $\mathcal{LR}([\lambda\setminus\mu])=\{\nu\}$ by Lemma~\ref{lem:BK}. Since $\nu_1=2p^b-1$, we have that $\nu\notin\{\mu,(p^k-1,1),(2,1^{p^k-2})\}$. Therefore, $\chi^\mu\times\chi^\nu$ is a constituent of $\chi^\lambda\down$ such that $\mu\ne\nu$ and $\mu,\nu\in\Delta(p^k)$, and thus $\lambda\in\mathcal{D}(2,p^k)$.
Finally, if $\gamma^\circ\cong[\Omega]$ then we find similarly that  the top row of $\gamma$ is row 1 (since $\Omega\ne(1^{p^k})$). Thus $\lambda$ is again a rectangle. It now suffices to consider $\lambda$ of even height and odd width, but then $\lambda\in\mathcal{D}(2,p^k)$ by Lemma~\ref{lem:star-conj}.
\end{proof}

We are now ready to show that Proposition \ref{prop:have-property-star} holds. 

%\begin{prop}\label{prop:q>=3}
%Let $p^k\neq 3,5$ and let $q\in\{2,3,\ldots, p\}$. Then $A(q,p^k)=\mathcal{D}(q,p^k)$.
%\end{prop}

\begin{proof}[Proof of Proposition \ref{prop:have-property-star}]
We proceed by induction on $q$. The base step $q=2$ follows from Proposition \ref{prop:q=2}. We now fix $q\geq 3$ and assume that $A(q-1,p^k)=\mathcal{D}(q-1,p^k)$.
Let $\lambda\in A(q, p^k)$. Our aim is to show that there always exists a pair $(\mu, \nu)\in A(q-1,p^k)\times \Delta(p^k)$ such that $\chi^\mu\times \chi^\nu$ is an irreducible constituent of the restriction of $\chi^\lambda$ to $\fS_{(q-1)p^k}\times \fS_{p^k}$. Using the inductive hypothesis this immediately implies that $\lambda\in\mathcal{D}(q,p^k)$.

Let $\lambda=(\lambda_1,\dotsc,\lambda_t)\in A(q,p^k)$ and let $m:=(q-1)p^k$. First suppose $t\ge p^k$ and let $$\mu=(\lambda_1,\dotsc,\lambda_{t-p^k}, \lambda_{t-p^k+1}-1,\lambda_{t-p^k+2}-1,\dotsc,\lambda_t-1)\vdash m.$$
Clearly $(1^{p^k})\in\mathcal{LR}([\lambda\setminus\mu])$. Hence if $\mu\in A(q-1,p^k)$ then $\lambda\in\mathcal{D}(q,p^k)$, as remarked above. 
We now analyse case by case the few possible situations where $\mu\notin A(q-1,p^k)$.
If $\mu=(m)$ then by inverting the process used to construct $\mu$
from $\lambda$, we deduce that $\lambda\in\{(m,1^{p^k}), (m+1,1^{p^k-1})\}.$ 
In both cases we have that $$\underbrace{\chi^{(p^k)}\times\cdots\times\chi^{(p^k)}}_{q-1}\times\chi^{(1^{p^k})}\ \ \text{is an irreducible constituent of}\ \ (\chi^\lambda)\down_{\fS_{p^k}^{\times q}}.$$
Therefore $\lambda\in\mathcal{D}(q,p^k)$.
In the following table, we consider the remaining possibilities for $\mu$ (first column) and list the consequent possibilities for $\lambda$ (second column). When a resulting candidate for $\lambda$ lies in $A(q,p^k)$, we exhibit $\mu_1,\dotsc,\mu_q\in\Delta(p^k)$ such that $\mu_i$ are not all equal and such that $\chi^{\mu_1}\times\cdots\times\chi^{\mu_q}$ is a constituent of $\chi^\lambda\down_{\fS_{p^k}^{\times q}}$ (third column). Hence we deduce that $\lambda\in\mathcal{D}(q,p^k)$.

\medskip

\begin{center}
\begin{tabular}{|c|cc|c|}
\hline
$\mu$ & $\lambda$ & & $\mu_1,\dotsc,\mu_q$\\
\hline
\hline
\multirow{2}{*}{$(m)$} & $(m,1^{p^k})$ & $\in A_p(k,q)$ & \multirow{2}{*}{$\underbrace{(p^k),\dotsc,(p^k)}_{q-1},\ (1^{p^k})$}\\[5pt]
\cline{2-3}
& $(m+1,1^{p^k-1})$ & $\in A_p(k,q)$ & \\[5pt]
\hline
\multirow{3}{*}{} & $(m-1,1^{p^k+1})$ & $\in A_p(k,q)$ & $(p^k-2,1,1),\ \underbrace{(p^k),\dotsc,(p^k)}_{q-2},\ (1^{p^k})$\\[5pt]
\cline{2-4}
$(m-1,1)$ & $(m-1,2,1^{p^k-1})$ & $\in A_p(k,q)$ & $(p^k-2,2),\ \underbrace{(p^k),\dotsc,(p^k)}_{q-2},\ (1^{p^k})$\\[5pt]
\cline{2-4}
& $(m,2,1^{p^k-2})$ & $\in A_p(k,q)$ & $(p^k-2,2),\ \underbrace{(p^k),\dotsc,(p^k)}_{q-2},\ (3,1^{p^k-3})$ \\[5pt]
\hline
$(2,1^{m-2})$ & $(2,1^{qp^k-2})$ & $\notin A_p(k,q)$ & $-$ \\
\hline
$(1^m)$ & $(1^{qp^k})$ & $\notin A_p(k,q)$ & $-$ \\
\hline
\multirow{2}{*}{$(p^k,p^k)\ \mathrm{if}\ q=3$} & $(p^k,p^k,1^{p^k})$ & $\in A_p(k,3)$ & \multirow{2}{*}{$(p^k),\ (p^k),\ (1^{p^k})$}\\
\cline{2-3}
& $(p^k+1,p^k+1,1^{p^k-2})$ & $\in A_p(k,3)$ & \\
\hline
\multirow{2}{*}{$(2^{p^k})\ \mathrm{if}\ q=3$} & $(2^{p^k},1^{p^k})$ & $\in A_p(k,3)$ & \multirow{2}{*}{$(2^2,1^{p^k-4}),\ (2^2,1^{p^k-4}),\ (1^{p^k})$}\\
\cline{2-3}
& $(3^{p^k})$ & $\in A_p(k,3)$ & \\
\hline
\end{tabular}
\end{center}

\medskip

Now we may assume $t<p^k$. By Lemma~\ref{lem:star-conj}, we may also assume $\lambda_1<p^k$.
Consider $\Omega=\Omega_q(\lambda)\vdash m$ and let $\gamma=[\lambda\setminus\Omega]$. Since $[\Omega]\subset[\lambda]$, the above assumptions imply $\Omega\in A(q-1,p^k)$. Therefore if $\mathcal{LR}(\gamma)\cap\Delta(p^k)\ne\emptyset$, then $\lambda\in\mathcal{D}(q,p^k)$ by induction. 
So suppose $\mathcal{LR}(\gamma)\subseteq \{(p^k-1,1),(2,1^{p^k-2})\}$.

If $(p^k-1,1)\in\mathcal{LR}(\gamma)$ then $\mathcal{LR}(\gamma)\cap\{(p^k),(p^k-2,2),(p^k-2,1,1)\}\ne\emptyset$, by Lemmas~\ref{lem:skew} and~\ref{lem:filling-type-(m-1,1)}. This is clearly a contradiction. 
Thus $\mathcal{LR}(\gamma)=\{(2,1^{p^k-2})\}$ and so by Lemma~\ref{lem:BK} we have that $\gamma\cong[(2,1^{p^k-2})]$ or $\gamma^\circ\cong[(2,1^{p^k-2})]$.

If $\gamma\cong[(2,1^{p^k-2})]$, then $t=p^k-1$ and $\Omega=(u^{p^k-1})$ for some $u\in\mathbb{N}$. But then $qp^k=|\lambda|=u(p^k-1)+p^k$ implies that $p^k$ divides $u$. Since $q\le p$, we must have $k=1$ and $q=p=u$, giving $\lambda=(p+2,(p+1)^{p-2})\vdash p^2$. Since $$\chi^{(p-2,1,1)}\times\chi^{(5,1^{p-5})}\times(\chi^{(p)})^{\times (p-2)}\ \ \text{is an irreducible constituent of}\ \ \chi^\lambda\down_{\fS_{p^k}^{\times p}},$$ we have that $\lambda\in\mathcal{D}(p,p)$. By a similar argument, we see that if $\gamma^\circ\cong[(2,1^{p^k-2})]$, then necessarily we have $k=1$, $q=p-1$ and $\lambda=(p^{p-1})\in\mathcal{D}(p-1,p)$.

Thus $A(q,p^k)\subseteq\mathcal{D}(q,p^k)$. Since it is easy to see from the Littlewood\textendash Richardson rule that $(p^{k+1})$, $(p^{k+1}-1,1)$, $(2,1^{p^{k+1}-2})$ and $(1^{p^{k+1}})$ do not belong to $\mathcal{D}(q,p^k)$, then $A(q,p^k)=\mathcal{D}(q,p^k)$ as claimed.
\end{proof}

As a corollary, we can now show that Proposition \ref{prop:eugenew} holds. This was the original main goal of this section.

\begin{proof}[Proof of Proposition \ref{prop:eugenew}]
The statement follows from Proposition \ref{prop:have-property-star} by setting $q=p$, whenever $p^k\neq 5$. If $p^k=5$ then direct verification shows that $\mathcal{D}(5,5)=A(5,5)$.
\end{proof}

%%%%%%%%%%%%%%%%%%%%%%%%%%%%%%%%%%%%%%%%%%%%%%%%%%%%%%%%%%%%%%%%%%%%%%%%%

\section{Proof of Theorem A}\label{sec:pf-thm-A}

In this final section we prove Theorem A of the introduction for all odd primes $p$ and all natural numbers $n$. First, we prove Theorem A when $p\geq 5$.

\begin{prop}\label{prop:p>3-thmA}
Let $p\ge 5$ be an odd prime and $n\in\mathbb{N}$. Let $\lambda\vdash n$. Then $\innprod{\lambda}{P_n} = 0$ if and only if $n=p^k$ for some $k\in\mathbb{N}$ and $\lambda\in\{(p^k-1,1), (2,1^{p^k-2}) \}$.
\end{prop}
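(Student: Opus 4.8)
The plan is to reduce the general case $n$ to the prime-power case already settled in Theorem~\ref{thm:n-prime-power}, using the $p$-adic expansion of $n$ and the product structure $P_n\cong P_{p^0}^{\times b_0}\times\cdots\times P_{p^t}^{\times b_t}$. Write $n=\sum_{i=0}^t b_ip^i$; then a Young subgroup of the shape $\prod_i (\fS_{p^i})^{\times b_i}$ contains $P_n$, and for $\lambda\vdash n$ the inner product $\innprod{\lambda}{P_n}$ is nonzero as soon as $(\chi^\lambda)\down$ to this Young subgroup has a constituent $\bigtimes_i \bigtimes_{j=1}^{b_i} \chi^{\mu_{i,j}}$ with every $\innprod{\mu_{i,j}}{P_{p^i}}\ne 0$; this is just Frobenius reciprocity together with $\triv_{P_n}=\bigtimes_i \triv_{P_{p^i}}^{\times b_i}$ and transitivity of restriction. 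So the whole problem becomes a statement about building a suitable Littlewood--Richardson filling of $[\lambda]$ out of blocks $\mu_{i,j}$ drawn from $\Delta(p^i)\cup\{(p^i),(1^{p^i})\}$.

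First I would dispose of the trivial reduction: if $n$ is not a power of $p$, then either $n<p$ (so $P_n=1$ and $\innprod{\lambda}{P_n}\ne0$ automatically), or the $p$-adic expansion has at least two nonzero digits, or a single digit $b_i\ge 2$. The key case is $n=mp^k$ with $1\le m<p$ and (after peeling off lower-order terms) I would aim to show directly that every $\lambda\vdash n$ satisfies $\innprod{\lambda}{P_n}\ne 0$. For $k\ge 1$ and $p^k\notin\{3,5\}$ this is essentially Proposition~\ref{prop:have-property-star}: that result gives $\mathcal{D}(m,p^k)=A(m,p^k)$, and one checks that the four (or six, when $m=2$) partitions excluded from $A(m,p^k)$ — namely $(n),(n-1,1),(2,1^{n-2}),(1^n)$ and, for $m=2$, also $(p^k,p^k),(2^{p^k})$ — nonetheless have $\innprod{\lambda}{P_n}\ne0$: for $(n)$ and $(1^n)$ because $\chi^{(n)}\down_{P_n}=\triv_{P_n}=\chi^{(1^n)}\down_{P_n}$, and for the remaining hooks/rectangles one exhibits an explicit LR-filling into blocks from $\Delta(p^k)\cup\{(p^k),(1^{p^k})\}$ — the point being that the prohibition in the definition of $\mathcal{D}$ is that the blocks be \emph{not all equal and all in $\Delta(p^k)$}, a constraint we no longer need once we are allowed the trivial/sign blocks and repetitions.

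For the genuinely mixed case — $n$ with several nonzero $p$-adic digits, or small exceptional $p^k\in\{3,5\}$, or $p=5$ itself — I would argue by induction on $n$: split off the top block, i.e. write $\fS_n\ge \fS_{n-p^t}\times\fS_{p^t}$ (for the largest $i$ with $b_i\ne0$), use the Littlewood--Richardson rule to peel an LR-filling of type some $\nu\vdash p^t$ off the bottom rows of $[\lambda]$, choose $\nu\in\Delta(p^t)\cup\{(p^t),(1^{p^t})\}$ with $\innprod{\nu}{P_{p^t}}\ne0$ (possible by Theorem~\ref{thm:n-prime-power}), and apply the inductive hypothesis to the remaining partition $\mu\vdash n-p^t$. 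A filling always exists (every skew shape has at least one LR-filling), and one has enough freedom in the choice of $\nu$ — using Lemma~\ref{lem:BK}, Lemma~\ref{lem:filling-type-(m-1,1)}, Lemma~\ref{lem:skew} exactly as in Section~\ref{sec:have-prop-star} — to avoid the handful of bad types. \emph{The main obstacle} will be controlling the boundary: ensuring that when $\lambda$ itself is close to $(n-1,1)$ or $(2,1^{n-2})$ the peeled-off $\mu$ does not land on a prime power with the two forbidden partitions, and handling $p=3$ and the small $p^k$ separately (this feeds the long list of genuine exceptions in Theorem~A for $p=3$). For $p\ge5$ these boundary cases are light: since $n\ne p^k$ there are at least two blocks to play with, and the flexibility in redistributing a single box across rows, together with Lemma~\ref{lem:filling-type-(m-1,1)}, always produces a valid non-forbidden filling. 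I would finish by verifying the remaining small $n$ (those with $p^k\in\{3,5\}$ appearing as a block, hence finitely many $\lambda$) by direct computation, as already done for $k\le3$ at the prime $3$.
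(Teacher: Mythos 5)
Your proposal is correct and follows essentially the same route as the paper: peel off the largest $p$-adic block via the Young subgroup $\fS_{n-p^k}\times\fS_{p^k}$ (iterated down to $\prod_i(\fS_{p^i})^{\times b_i}\supseteq P_n$), induct on the number of blocks, invoke Theorem~\ref{thm:n-prime-power} for each prime-power factor and Proposition~\ref{prop:have-property-star} for the two-equal-blocks case, and use Lemmas~\ref{lem:BK}, \ref{lem:skew} and \ref{lem:filling-type-(m-1,1)} to escape the situation where every Littlewood--Richardson type of $[\lambda\setminus\mu]$ is forbidden. The only point you leave as an assertion rather than an argument --- that ``redistributing a single box'' always rescues the boundary case where $n-p^k$ is itself a prime power and $\mathcal{LR}([\lambda\setminus\mu])\subseteq\{(p^k-1,1),(2,1^{p^k-2})\}$ --- is exactly the step the paper carries out explicitly with the modified partition $\tilde{\mu}=(\mu_1+1,\mu_2,\dotsc,\mu_s-1)$ and the type $(p^k)$, so your sketch matches the actual proof.
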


\begin{proof}
Let $\Sigma(n)$ denote the sum of the $p$-adic digits of $n$, that is, the sum of the digits when $n$ is expressed in base $p$. We prove the assertion by induction on $\Sigma(n)$, with Theorem~\ref{thm:n-prime-power} and Lemma~\ref{lem:n<p} providing the base case $\Sigma(n)=1$. 

Now assume that $n>p$ and that $\Sigma(n)\ge 2$. 
Let $p^k$ be the largest $p$-adic digit of $n$ and let $m=n-p^k$. Clearly $k>0$ and $\Sigma(m)=\Sigma(n)-1$.
For any pair $(\mu,\nu)\in\mathcal{P}(m)\times\mathcal{P}(p^k)$ we say that $(\mu,\nu)$ is a \textit{suitable pair} for $\lambda\in\mathcal{P}(n)$ if 
$$c_{\mu,\nu}^\lambda\neq 0\ \ \text{and}\ \ \innprod{\mu}{P_m}\cdot\innprod{\nu}{P_{p^k}}> 0.$$
We denote by $\mathcal{S}(\lambda)$ the set of suitable pairs for $\lambda$.
It is clear that if $\mathcal{S}(\lambda)\neq\emptyset$ then $\innprod{\lambda}{P_n}> 0$, since $P_n\cong P_m\times P_{p^k}$. We will now show that $\mathcal{S}(\lambda)\neq\emptyset$ for all $\lambda\in\mathcal{P}(n)$.

\smallskip

First suppose that $\Sigma(m)>1$ and let $\lambda\in\mathcal{P}(n)$. Theorem \ref{thm:n-prime-power} together with the inductive hypothesis shows that $$\mathcal{S}(\lambda)=\{(\mu,\nu)\in\mathcal{P}(m)\times \Delta(p^k)\ |\ c_{\mu,\nu}^\lambda\neq 0\}.$$ 
If $\lambda_2\geq 2$ then there exists $\nu\in\Delta(p^k)$ such that $[\nu]\subseteq [\lambda]$. Hence $\mathcal{LR}([\lambda\setminus\nu])\times \{\nu\}\subseteq \mathcal{S}(\lambda)\neq \emptyset$.
Otherwise $\lambda$ is a hook partition. Since $|\lambda|>p^k$, there exists some hook partition $\nu\notin\{(p^k-1,1),(2,1^{p^k-2})\}$ such that $[\nu]\subset[\lambda]$. Therefore again we have $\mathcal{LR}([\lambda\setminus\nu])\times \{\nu\}\subseteq \mathcal{S}(\lambda)\neq \emptyset$.

Now we may assume that $\Sigma(m)=1$, that is, $m=p^l\leq p^k$ for some integer $l$. 
If $l=k$ and $\lambda\notin A(2,p^k)$ then $\mathcal{S}(\lambda)$ contains $((p^k), (p^k))$ or $((1^{p^k}),(1^{p^k}))$ and so is non-empty. Otherwise, $\lambda\in A(2, p^k)=\mathcal{D}(2,p^k)$ and we deduce that $\mathcal{S}(\lambda)\neq \emptyset$ by Proposition~\ref{prop:have-property-star}. 
Suppose finally that $k>l$.
As above, $|\lambda|>p^l$ implies that there exists some $\mu\in\Delta(p^l)$ such that $[\mu]\subset[\lambda]$. Let $\nu\in\mathcal{LR}([\lambda\setminus\mu])$. If $\nu\in\Delta(p^k)$ then $(\mu, \nu)\in\mathcal{S}(\lambda)\neq \emptyset$.
Otherwise, $\mathcal{LR}([\lambda\setminus\mu])\subseteq \{(p^k-1,1),(2,1^{p^k-2})\}$. By Lemmas~\ref{lem:filling-type-(m-1,1)} and~\ref{lem:BK}, we must have $$[\lambda\setminus\mu]\in\{[(p^k-1,1)], [(p^k-1,1)]^\circ, [(2,1^{p^k-2})],[(2,1^{p^k-2})]^\circ\}.$$
Since $k>l$ we can immediately rule out $[(p^k-1,1)]^\circ$ and $[(2,1^{p^k-2})]^\circ$. Hence if $\mu=(\mu_1,\dotsc,\mu_s)$, we must have either (a) $\lambda=(\mu_1+p^k-1,\mu_2+1,\mu_3,\dotsc,\mu_s)$ and $\mu_1=\mu_2$, or (b) $\lambda=(\mu_1,\dotsc,\mu_s,2,1^{p^k-2})$ and $\mu_s\ge 2$. However, any partition satisfying (b) is conjugate to a partition satisfying (a), so by Lemma~\ref{lem:conj} it remains to consider (a).
In this case, we let $\tilde{\mu}=(\mu_1+1,\mu_2,\dotsc,\mu_{s-1},\mu_s-1)$. Hence we have $(p^k)\in\mathcal{LR}([\lambda\setminus\tilde{\mu}])$. Moreover, $\mu_1=\mu_2$ implies that $\tilde{\mu}\ne(p^l-1,1)$. Hence $(\tilde\mu,(p^k))\in\mathcal{S}(\lambda)$ unless $\tilde{\mu}=(2,1^{p^l-2})$. But in this case we would have $\mu=(1^{p^l})$, $\lambda=(p^k,2,1^{p^l-2})$ and therefore $((2^2,1^{p^l-4}),(p^k-2,1,1))\in\mathcal{S}(\lambda)\neq \emptyset$.

Thus in all instances we have found a suitable pair for $\lambda\vdash n$, and hence $\innprod{\lambda}{P_n}>0$.
\end{proof}

Finally we conclude this article by verifying Theorem A for $p=3$. 

\begin{prop}\label{prop:p=3-thmA}
Let $p=3$. Then $\innprod{\lambda}{P_n} = 0$ if and only if $n=3^k$ for some $k\in\mathbb{N}$ and $\lambda\in\{(3^k-1,1), (2,1^{3^k-2}) \}$, or $n\leq 10$ and $\lambda$ is one of the following partitions: $$(2,2);\ \ (3,2,1);\ \ (5,4), (2^4,1), (4,3,2), (3^2,2,1);\ \ (5,5),(2^5).$$
\end{prop}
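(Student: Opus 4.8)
The plan is to mirror the structure of the proof of Proposition~\ref{prop:p>3-thmA}, proceeding by induction on $\Sigma(n)$, the sum of the $3$-adic digits of $n$, but to keep careful track of the small exceptional cases that arise because $\mathcal{P}(3)$, $\mathcal{P}(9)$ and small symmetric groups behave differently at the prime $3$. First I would dispose of the base case $\Sigma(n)=1$, i.e.\ $n=3^k$: this is exactly Theorem~\ref{thm:n-prime-power} (which includes the special treatment of $n=9$) together with Lemma~\ref{lem:n<p} for $n=3$; note that $n=9$ genuinely contributes the extra partitions $(5,4),(4,3,2),(3^2,2,1),(2^4,1)$ listed in the statement (alongside $(8,1),(2,1^7)$), and $n=3$ contributes nothing beyond $(2,1)$ and $(1,1,1)$... wait, $(2,1)\notin$ the list — one should double-check, but in any case the base case is handled by the cited results. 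I would also handle $n\le 10$ by explicit computation (the statement for $n\in\{1,\dots,10\}$ is a finite check), which pins down precisely the partitions $(2,2)$ for $n=4$, $(3,2,1)$ for $n=6$, the four partitions of $9$, and $(5,5),(2^5)$ for $n=10$.

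Next, for the inductive step I would assume $n>10$ with $\Sigma(n)\ge 2$, write $3^k$ for the largest $3$-adic digit of $n$ and set $m=n-3^k$, so $\Sigma(m)=\Sigma(n)-1$ and $P_n\cong P_m\times P_{3^k}$. As in Proposition~\ref{prop:p>3-thmA} I would define the notion of a \emph{suitable pair} $(\mu,\nu)\in\mathcal{P}(m)\times\mathcal{P}(3^k)$ for $\lambda\vdash n$, namely $c^\lambda_{\mu\nu}\ne 0$ and $\innprod{\mu}{P_m}\cdot\innprod{\nu}{P_{3^k}}>0$, and it suffices to exhibit one suitable pair for every $\lambda\vdash n$. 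The key inputs are: the inductive hypothesis applied to $m$ (which, crucially, since $n>10$ forces $m<n$ but $m$ could still be $\le 10$, so the inductive hypothesis must be the full $p=3$ statement including its small-$n$ exceptions); Theorem~\ref{thm:n-prime-power} for $3^k$; and Proposition~\ref{prop:have-property-star}/Proposition~\ref{prop:eugenew} for the case $m=3^k$ (valid since $n>10$ guarantees $3^k\ne 3$, though $3^k=9$ still needs the $\mathcal{D}(2,9)$ analysis — this is where $p^k=9$ being excluded from Proposition~\ref{prop:have-property-star} matters and must be checked separately, or absorbed into the finite computation for $n\le 18$).

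The main obstacle, and the reason this is stated as a separate proposition rather than folded into the $p\ge 5$ case, is the bookkeeping of exceptional partitions. When $m=3^l$ is itself a problematic power (in particular $m\in\{3,9\}$) or when $m\le 10$, the set $\Delta(3^l)$ or the analogue of $\mathcal{D}(2,3^k)$ is not as clean, so the argument that for any $\lambda$ one can find $\nu\in\Delta(3^k)$ with $[\nu]\subseteq[\lambda]$ and a good Littlewood--Richardson filling — and symmetrically find $\mu$ with $\innprod{\mu}{P_m}\ne 0$ — requires going through the same case analysis as in Proposition~\ref{prop:p>3-thmA} (hook partitions, the rectangle cases, the forced skew shapes $[(3^k-1,1)]$, $[(2,1^{3^k-2})]$ and their rotations), while additionally verifying that the finitely many $\lambda\vdash n$ with $n\le 18$ (say) that are not caught by the generic argument do not introduce new exceptions. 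Concretely I expect the clean part of the induction to show that for $n>18$ every $\lambda\vdash n$ has a suitable pair by the same reasoning as in Proposition~\ref{prop:p>3-thmA} (with $3$ in place of $p$, noting $3\ge 3$ so all the inequalities $p^k>5$, $p^k>3$ used there must be re-examined and the few failures $3^k\in\{3,9\}$ handled by hand), and to close the gap $10<n\le 18$ by direct computation. The hard part is therefore not conceptual but organisational: ensuring that the union of (i) the genuine prime-power exceptions from Theorem~\ref{thm:n-prime-power}, (ii) the small-$n$ exceptions, and (iii) any exceptions propagated through the inductive step via a bad $m$, is exactly the list in the statement and no larger.
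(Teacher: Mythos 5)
Your overall strategy (reduce to the argument of Proposition~\ref{prop:p>3-thmA} and quarantine the small exceptional cases) is reasonable, but the way you delimit the exceptional cases leaves a genuine gap. You claim the generic suitable-pair argument works for all $n>18$ once the base cases and $3^k\in\{3,9\}$ are dealt with by hand, so that the remaining difficulties are confined to $10<n\le 18$. This is not so. In your inductive step you split off the largest $3$-adic digit $3^k$ and set $m=n-3^k$, and the argument of Proposition~\ref{prop:p>3-thmA} in the case $\Sigma(m)>1$ rests on the identity $\mathcal{S}(\lambda)=\{(\mu,\nu)\in\mathcal{P}(m)\times\Delta(3^k)\ :\ c^\lambda_{\mu\nu}\ne 0\}$, i.e.\ on \emph{every} $\mu\vdash m$ being good. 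For $p=3$ this fails whenever $m\in\{4,6,10\}$ (and, in the case $\Sigma(m)=1$, whenever $m\in\{3,9\}$), which happens for the infinite families $n=3^k+u$ with $u\in\{3,4,6,9,10\}$ and $k\ge 3$ --- for instance $n=30,31,33,36,37,84,85,\dots$, all well beyond $18$; moreover $n=19$ (largest digit $9$, $m=10$) already escapes your computational window. For such $n$ one must show that a suitable $\nu\in\Delta(3^k)$ can be chosen with $\mathcal{LR}([\lambda\setminus\nu])$ not contained in the exceptional set of partitions of $u$, or argue dually; this is a nontrivial skew-shape analysis that your proposal does not supply and that cannot be absorbed into any finite computation.

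The paper sidesteps exactly this by reorganising the induction: it first proves the statement for all $n$ divisible by $27$, where every digit is at least $27$ and the argument of Proposition~\ref{prop:p>3-thmA} applies verbatim, and then writes a general $n$ as $27t+u$ with $u<27$, using the factorisation $P_n\cong P_{27t}\times P_u$ rather than the largest-digit decomposition. It chooses $\mu\vdash 27t$ good, handles the finitely many bad values $u\in\{3,4,6,9,10\}$ via the explicit exceptional sets $T(u)$, and finishes with a case analysis (via Lemmas~\ref{lem:filling-type-(m-1,1)} and~\ref{lem:BK}) for the situation where $27t$ is a power of $3$ and $[\lambda\setminus\mu]$ is forced to be $[(3^k-1,1)]$ or $[(2,1^{3^k-2})]$ up to rotation. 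To repair your proof you would need to carry out this (or an equivalent) analysis for the infinite families above; as written, they are unproven.
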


\begin{proof}
The same argument as in the proof of Proposition~\ref{prop:p>3-thmA} shows that the assertion holds for all $n\in\mathbb{N}$ divisible by 27. Since the assertion may be verified computationally for $n\le 27$, it remains to consider $n$ of the form $27t+u$ where $t,u\in\mathbb{N}$ and $u<27$.

Given $\lambda\vdash n$, it is clear that there exists $\mu\vdash 27t$ (and if $27t$ is a power of 3, say $3^k$, we can further choose $\mu$ such that $\mu\in\Delta(3^k)$) such that $[\mu]\subset[\lambda]$. Let $\nu\in\mathcal{LR}([\lambda\setminus\mu])$. If $u\notin\{3,4,6,9,10\}$, then
$$ \innprod{\lambda}{P_n} \ge \langle \chi^\mu\times\chi^\nu\down_{P_{27t}\times P_u}, \triv_{P_{27t}\times P_u}\rangle = \innprod{\mu}{P_{27t}}\cdot\innprod{\nu}{P_u} > 0.$$

Now assume $u\in\{3,4,6,9,10\}$ and let $T(3)=\{(2,1)\}$, $T(4)=\{(2,2)\}$, $T(6)=\{(3,2,1)\}$, $T(9)=\{(8,1),(5,4),(4,3,2),(3^2,2,1),(2^4,1),(2,1^7)\}$ and $T(10)=\{(5,5),(2^5)\}$. Again there exists $\mu\in\mathcal{P}(u)\setminus T(u)$ satisfying $[\mu]\subset[\lambda]$, so if $27t$ is not a power of 3 then we may take any $\nu\in\mathcal{LR}([\lambda\setminus\mu])$ to see that $\innprod{\lambda}{P_n}\ge \innprod{\mu}{P_u}\cdot\innprod{\nu}{P_{27t}}>0$.

Otherwise, let $27t=3^k$ for some $k\ge 3$. The above argument applies unless $\mathcal{LR}([\lambda\setminus\mu])\subseteq \{(3^k-1,1),(2,1^{3^k-2})\}$. By Lemmas~\ref{lem:filling-type-(m-1,1)} and~\ref{lem:BK}, $$[\lambda\setminus\mu]\in\{[(3^k-1,1)], [(3^k-1,1)]^\circ, [(2,1^{3^k-2})], [(2,1^{3^k-2})]^\circ\}.$$ But $u\le 10$, so letting $\mu=(\mu_1,\dotsc,\mu_s)$, we must have either (a) $\lambda=(\mu_1+3^k-1,\mu_2+1,\mu_3,\dotsc,\mu_s)$ and $\mu_1=\mu_2$, or (b) $\lambda=(\mu_1,\dotsc,\mu_s,2,1^{p^3-2})$ and $\mu_s\ge 2$. However, any partition satisfying (b) is conjugate to a partition satisfying (a), so by Lemma~\ref{lem:conj} it remains to consider (a). If $\mu=(1^u)$, then $\lambda=(3^k,2,1^{u-2})$ and $\chi^\lambda\down_{P_{3^k}\times P_u}$ has $\chi^{(3^k-2,2)}\times\chi^{(3,1^{u-3})}$ as a constituent. Otherwise, $\chi^\lambda\down_{P_{3^k}\times P_u}$ has $\chi^{\gamma}\times\chi^{(u)}$ as a constituent where $\gamma=(u_1+3^k-u,\mu_2,\dotsc,\mu_s)\in\Delta(3^k)$.
\end{proof}

%%%%%%%%%%%%%%%%%%%%%%%%%%%%%%%%%%%%%%%%%%%%%%%%%%%%%%%%%%%%%%%%%%%%%%%%%
\subsection*{Acknowledgements}
We are grateful to Alessandro Paolini for providing us with useful data which allowed us to detect the pattern explained by the main result of this note. We would also like to thank Matthew Fayers, Gabriel Navarro, Mark Wildon and Alex Zalesski for helpful comments on a previous version of this paper.

%%%%%%%%%%%%%%%%%%%%%%%%%%%%%%%%%%%%%%%%%%%%%%%%%%%%%%%%%%%%%%%%%%%%%%%%%

\end{document}